\documentclass[reqno]{amsart}
\usepackage{amsmath}%
\usepackage{amsfonts}%
\usepackage{amssymb}%
\usepackage{lscape}%
\usepackage[matrix,arrow,curve]{xy}%
\usepackage{graphicx}
\usepackage{tabularx}
\usepackage{cite}
\usepackage{url}
\usepackage{cmap}

\textwidth=14.15cm

\hoffset -7mm
\voffset -10mm

\numberwithin{equation}{section}

\theoremstyle{plain}

\newtheorem{lemma}{Lemma}[section]

\theoremstyle{definition}
\newtheorem{remark}{Remark}

\def\Z{\mathbb{Z}}
\def\R{\mathbb{R}}
\def\C{\mathbb{C}}
\def\T{\mathbb{T}}

\def\coef{\operatorname{\mathfrak{cf}}}
\def\wt{\widetilde}
\def\wh{\widehat}
\def\bL{\boldsymbol L}
\def\bI{\boldsymbol I}

\def\Fr{\mathcal{F}}

\def\np{\operatorname{np}}
\def\re{\operatorname{Re}}
\def\quo{\hskip 0pt\lower 0.5 pt\hbox{${}^1\!\!/\!{}_4$}}

\def\tabErr{1}
\def\tabEmp{2}
\def\tabEmpCseven{3}

\begin{document}

\input epsf

\author{S.\,Yu.~Orevkov}
\address{IMT, l'universit\'e Paul Sabatier, Toulouse, France; Steklov Math. Inst., Moscow, Russia.}

\title[Asymptotics of the number of primitive lattice triangulations]
{Asymptotics of the number of primitive lattice triangulations of rectangles of width $4$ and $5$}

\maketitle

\begin{abstract}
Let $f(m,n)$ be the number of primitive lattice triangulations of an $m \times n$ rectangle. 
We express the limits $\lim_n f(m,n)^{1/n}$ for $m = 4$ and $m=5$ in terms of certain
systems of Fredholm integral equations on generating functions (the case $m\le3$ was treated in
a previous paper). Solving these equations numerically, 
we compute approximate values of these limits with a rather high precision.
\end{abstract}

\section{Introduction. Main results}\label{sect.intro}
This paper is a sequel of \cite{refOre2022}.
A {\it lattice triangulation} of a polygon in $\R^2$ is a triangulation
with all vertices in $\Z^2$.
A lattice triangulation is called {\it primitive} (or {\it unimodular})
if each triangle is primitive, i.e., has the minimal possible area $1/2$.
We denote the number of primitive lattice triangulations of an
$m\times n$ rectangle by $f(m,n)$. Let
$$
         c_{m,n} = \frac{ \log_2f(m,n)}{mn},
\qquad
         c_m = \lim_{n\to\infty} c_{m,n},
\qquad
         c = \lim_{m\to\infty} c_m
           = \lim_{n\to\infty} c_{n,n}.
$$
The existence of the limits is proven in \cite[Proposition 3.6]{refKZ}.
In \cite{refKZ} the number $c(m,n)$ is called the {\it capacity} of an $m\times n$ rectangle. 
Some estimates of $c_m$ and $c$ were obtained in
\cite{refAnclin,refKZ,refMVW,refOre1999,refOre2022,refWelzl}
(see more details in \cite{refOre2022}). In particular, the best known upper bound for $c$
is $c\le4\log_2\frac{1+\sqrt 5}2\approx 2.777$; see \cite{refWelzl}.

It is easy to show that $f(1,n)=\binom{2n}n$ and hence $c_1=2$; see \cite[\S 2.1]{refKZ}.
The exact value $c_2=\frac12\log_2\frac{611+\sqrt{73}}{36}$ and 360 decimal digits of $c_3$ were computed in \cite{refOre2022}.
Using the same approach, we compute here $65$ digits of $c_4$ and $15$ digits of $c_5$, namely,
$$
  \lim_{n}f(4,n)^{\frac 1{4n}} =
  4.29876 67575
    00969 11161
    79591 31117
    46998 15749
    21782 24986
    28476 37456
    15251,
$$
$$
  \lim_{n\to\infty}f(5,n)^{\frac 1{5n}} = 4.34096 16193 1753,
$$
and hence
$$
c_4 = 2.10392 28346
        93077 90885
        50919 47650
        35290 59916
        33019 91475
        08947 02758
        17980,
$$
$$ c \ge c_5 = 2.11801 46670 3561. $$
So far this is the best proven lower bound for $c$.

As in \cite{refOre2022},
we express $c_4$ (see \S\ref{sect.4}) and $c_5$ (see \S\ref{sect.5}) in terms of
solutions of systems of integral equations, and we compute the approximate values of $c_4$ and $c_5$
by solving systems of usual linear equations obtained by replacing the integrals with Riemann sums.
However, the computations here are more complicated than in \cite{refOre2022}.

Another difference from \cite{refOre2022} is the following.
In \cite{refOre2022} we reduced the problem to a single classical Fredholm equation of the first kind with an analytic
kernel of integration. Since the integral operator was compact, it was easy to prove the uniqueness of
its solution and the exponential rate of convergence of its discretizations.
In the present paper, the integral operators are no longer compact: the unknown functions are functions
of two (for $c_4$) or three (for $c_5$) variables but the operators involve the integration with
respect to only one variable (i.e., the kernel of integration is a kind of delta-function).
In \S\ref{sect.fred4} we discuss the uniqueness of solutions and the convergence of
discretizations for such operators.

In \S\ref{sect.compute} we expose two simple observations which allowed us to improve
the convergence. They were crucial for the computation of $c_4$ and $c_5$ with so high accuracy.
They also allowed us to compute $c_3$ to 1100 digits.
In \S\ref{sect.exact} we report about newly computed exact values of $f(m,n)$ and about an empirical estimate for $c_6$,
$c_7$, and for the subexponential factor of the asymptotics,
which we computed via known values of $f(m,n)$ by a method proposed by Lando and Zvonkin in \cite[\S6]{refLZ}.
In \S\ref{sect.np} we give an asymptotic lower bound for the number of all (not only primitive) lattice triangulations.

\subsection*{Acknowledgements}
I thank V.\;I.~Bogachev and S.\;K.~Lando for useful discussions.


\section{Strips of width 4}\label{sect.4}

In this section we express $c_4$ via solution of a certain system of integral equations
on the generating functions, whose numerical solution allows us to compute $c_4$ to
65 decimal digits.


\subsection{ Reduction to trapezoids (which decreases the dimension of the problem)}\label{sect.trapez4}
(Cf. \cite[\S4.1]{refOre2022}.)
For $a,e\ge 0$ such that $a+e$ is even, let $j_{a,e}^*$ be the number of primitive triangulations
of the trapezoid $T_4(a,e)$ spanned by $(0,0)$, $(1,4)$, $(1+e,4)$, $(a,0)$. We set $j_{0,0}^*=1$ and
$j^*_{a,e}=0$ when $a+e$ is odd. Consider the generating function
$$
   J^*(x) = \sum_n j_n^* x^n
          = \sum_{a,e\ge 0} j_{a,e}^* x^{a+e} = 1 + 6x^2 + 750 x^4 + 189121 x^6 + \dots
$$
Let $j_{a,e}$ be the number of the primitive triangulations of $T_4(a,e)$ which do not have interior
edges of the form $[(k,0),(l,4)]$ (by convention, $j_{0,0}=0$) and let
$$
   J(x) = \sum_n j_n x^n
        = \sum_{a,e\ge 0}j_{a,e} x^{a+e} = 6x^2 + 714 x^4 + 180337 x^6 + \dots
$$
Then we have
$$
     J^*(x) = \frac1{1-J(x)}\qquad\text{and}\qquad
     \lim_{n\to\infty}f(4,n)^{1/n} = \lim_{n\to\infty} (j_{2n}^*)^{1/n}
$$
(see \cite[\S4.1]{refOre2022} and \cite[Lemma 3.1]{refOre2022} respectively).
Thus
\begin{equation}\label{eq.trapez4}
                                     c_4 = -\frac12\log_2\beta_4,
\end{equation}
where $\beta_4$ is the first real root of the equation $J(x)=1$.



\subsection{ Recurrence relations }\label{sect.rec4}
We introduce a notation similar to that in \cite[\S4]{refOre2022}.
For integers $a,b,c,d,e$ such that $a\ge-1$ and $b,c,d,e\ge0$, let
$f_{a,b,c,d,e}$ denote the number of primitive lattice triangulations of the
polygon $[(0,a),(1,b),(2,c),(3,d),(4,e),(4,0),(0,-1)]$ (see Figure~\ref{figFGHJ}) without
interior edges of the form $[(0,y_1),(4,y_2)]$ ({\it side-to-side} edges).
Following \cite{refKZ}, we call these polygons {\it shapes}.
Similarly we define $g,h,j$ with corresponding subscripts and superscripts according
to Figure~\ref{figFGHJ}. For example, if $a\ge-1$, $\min(c,d,e)\ge0$, and $c-a\equiv 1$ mod $2$,
then we define $g_{a,c,d,e}^{(1)}$ to be the number of primitive lattice triangulations
of the shape $[(0,a),(2,c),(3,d),(4,e),(4,0),(0,-1)]$ without interior side-to-side edges.
If the inequalities or congruences are not satisfied, we set the corresponding numbers to zero.
By convention, $j_{-1,0}^{(1)}=0$; in this case the shape degenerates to the segment
$[(0,-1),(4,0)]$.

\begin{figure} 
	\centering
		\includegraphics[width=115mm]{fg.eps} 
\centerline{
 $f_{a,b,c,d,e}$  \hskip 37mm
  $g^{(1)}_{a,c,d,e}$ \hskip 13mm
  $g^{(2)}_{a,b,d,e}$ \hskip 13mm
  $g^{(3)}_{a,b,c,e}$ \hskip 2mm
}
\vskip3pt
\centerline{%
  \hskip 48mm $c-a\equiv 1(2)$
  \hskip  5mm $d-b\equiv 1(2)$
  \hskip  5mm $e-c\equiv 1(2)$
}
\centering
		\includegraphics[width=115mm]{h.eps} 
\centerline{
  $h^{(1)}_{a,d,e}$ \hskip 15mm
  $h^{(2)}_{a,d,e}$ \hskip 15mm
  $h^{(3)}_{a,c,e}$ \hskip 15mm
  $h^{(4)}_{a,b,e}$ \hskip 15mm
  $h^{(5)}_{a,b,e}$ \hskip 2mm
}
\vskip3pt
\centerline{%
  \hskip 0mm $d-a\equiv 1(3)$
  \hskip 5mm $d-a\equiv 2(3)$
  \hskip 5mm $c-a\equiv 1(2)$
  \hskip 5mm $e-b\equiv 1(3)$
  \hskip 5mm $e-b\equiv 2(3)$
}
\centerline{$\,e-c\equiv 1(2)$}
\centering
		\includegraphics[width=115mm]{j.eps} 
\centerline{%
  $j^{(1)}_{a,e}$  \hskip 17mm
  $j^{(2)}_{a,e}$  \hskip 25mm
}
\vskip3pt
\centerline{%
  \hskip 0mm $e-a\equiv 1(4)$
  \hskip 4.5mm $e-a\equiv 3(4)$ \hskip 20mm
}
	\caption{}
	\label{figFGHJ}
\end{figure}

We introduce the generating functions for these numbers
$$
   F(x,y,z,w,v) = \sum_{a,b,c,d,e} f_{a,b,c,d,e} x^a y^b z^c w^d v^e,
\qquad
   J_k(x,v) = \sum_{a,e} j^{(k)}_{a,e} x^a v^e\quad (k=1,2),
$$
$$
   G_1(x,z,w,v) = \sum_{a,c,d,e} g^{(1)}_{a,c,d,e} x^a z^c w^d v^e,
\qquad
   G_2(x,y,w,v) = \sum_{a,b,d,e} g^{(2)}_{a,b,d,e} x^a y^b w^d v^e,
$$
$$
   G_3(x,y,z,v) = \sum_{a,b,c,e} g^{(3)}_{a,b,c,e} x^a y^b z^c v^e,
\qquad
   H_3(x,z,v) = \sum_{a,c,e} h^{(3)}_{a,c,e} x^a z^c v^e,
$$
$$
   H_k(x,w,v) = \sum_{a,d,e} h^{(k)}_{a,d,e} x^a w^d v^e\; (k=1,2), \quad
   H_k(x,y,v) = \sum_{a,b,e} h^{(k)}_{a,b,e} x^a y^b v^e\; (k=4,5),
$$
and their symmetrizations
\begin{align*}
   \wt F(x,y,z,w,v) &= F(x,y,z,w,v) + F(v,w,z,y,x),\\
   \wt G_1(x,z,w,v) &= G_1(x,z,w,v) + G_3(v,w,z,x),\\
   \wt G_2(x,y,w,v) &= G_2(x,y,w,v) + G_2(v,w,y,x),\\
   \wt H_k(x,w,v) &= H_k(x,w,v) + H_{6-k}(v,w,x),\qquad k=1,2,\\
   \wt H_3(x,z,v) &= H_3(x,z,v) + H_3(v,z,x),\\
   \wt J_1(x,v) &= J_1(x,v) + J_2(v,x).
\end{align*}

Throughout the paper we use the notation $\coef_{\mathbf{m}}\mathcal F$ for the coefficient
of a monomial $\mathbf{m}=\mathbf{m}(x_1,x_2,\dots)$ in a Laurent series
$\mathcal F=\mathcal F(x_1,x_2,\dots)$.
In this notation, the recurrence relations in \cite[Lemma 2.2]{refOre2022} take the
following form (cf.~\cite[\S4.2]{refOre2022}; here we omit the intermediate step
consisting in finding the relations between the non-symmetrized generating functions):
\begin{align*}
 \wt F(x,y,z,w,v)Q(x,y,z,w,v)
     & =      y^{1/2}\wt G_1\big(xy^{1/2},y^{1/2}z,w,v\big)(1 - w - v) \\
     &\quad + z^{1/2}\wt G_2\big(x,yz^{1/2},z^{1/2}w,v\big)(1 - x)(1 - v) \\
     &\quad + w^{1/2}\wt G_1\big(vw^{1/2},w^{1/2}z,y,x\big)(1 - y - x) \\
     &\quad - y^{1/2}w^{1/2}\wt H_3\big(xy^{1/2},y^{1/2}zw^{1/2},w^{1/2}v\big),
\end{align*}
where $Q(x,y,z,w,v) =  1 - x - y - z - w - v + xz + xw + xv + yw + yv + zv - xzv$,
\begin{align*}
   \wt G_1(x,z,w,v)(1-w-v)
         &= \coef_{\xi^{-1}}\wt F\Big(\frac{x}\xi,\xi^2,\frac{z}\xi,w,v\Big)(1-w-v) \\
         &\quad - w^{1/2}\coef_{\xi^{-1}}\wt G_1\Big(
                         vw^{1/2},\frac{w^{1/2}z}\xi,\xi^2,\frac{x}\xi\Big)\\
         &\quad + z^{1/3}(1-v)\wt H_1\big(xz^{1/3},z^{2/3}w,v\big)\\
         &\quad + w^{1/2}\wt H_3\big(x,zw^{1/2},w^{1/2}v\big),
\end{align*}
\begin{align*}
  \wt G_2(x,y,w,v)(1-x)(1-v)
          &= \coef_{\xi^{-1}}\wt F\Big(x,\frac{y}\xi,\xi^2,\frac{w}\xi,v\Big)(1-x)(1-v)\\
          &\quad + y^{1/3}(1-v)\wt H_2\big(xy^{2/3},y^{1/3}w,v\big)\\
          &\quad + w^{1/3}(1-x)\wt H_2\big(vw^{2/3},w^{1/3}y,x\big),
\end{align*}
\begin{align*}
 &\wt H_1(x,w,v)(1-v) = \coef_{\xi^{-1}}\wt G_2\Big(\frac{x}{\xi^2},\xi^3,\frac{w}\xi,v\Big)(1-v)
              + \frac{1}{x},
\\
 &\wt H_2(x,w,v) = \coef_{\xi^{-1}}\wt G_1\Big(\frac{x}\xi,\xi^3,\frac{w}{\xi^2},v\Big),
\\
 &\wt H_3(x,z,v)
          = \coef_{\xi^{-1}}\!\wt G_1\Big(x,\frac{z}\xi,\xi^2,\frac{v}\xi\Big)
          + \coef_{\xi^{-1}}\!\wt G_1\Big(v,\frac{z}\xi,\xi^2,\frac{x}\xi\Big) 
   - \coef_{\xi_1^{-1}\xi_2^{-1}}\!\wt F\Big(
              \frac{x}{\xi_1},\xi_1^2,\frac{z}{\xi_1\xi_2},\xi_2^2,\frac{v}{\xi_2}\Big),\\
  &\wt J_1(x,v) = \coef_{\xi^{-1}}\wt H_2\Big(\frac{v}\xi,\xi^4,\frac{x}{\xi^3}\Big).
\end{align*}
Notice that 
(see \S\ref{sect.trapez4} for the definition of $J(x)$)
\begin{equation}\label{eq.J}
                                   J(x) = x\wt J_1(x,x).
\end{equation}


\subsection{ A change of variables }\label{sect.ch.var4}
We set
\begin{align*}
  f(t,s,u) = \wt F\Big(\frac{x}{t},\frac{x^2t^2}{s},\frac{x^2s^2}{tu},\frac{x^2u^2}{s},\frac{x}{u}\Big),&\qquad
  q(t,s,u) = Q\Big(\frac{x}{t},\frac{x^2t^2}{s},\frac{x^2s^2}{tu},\frac{x^2u^2}{s},\frac{x}{u}\Big),\\
  g_1(s,u) = \frac{1}{s^{1/2}}\,
    \wt G_1\Big(\frac{x^2}{s^{1/2}},\frac{x^3 s^{3/2}}{u},\frac{x^2 u^2}{s},\frac{x}{u}\Big),&\quad
  g_2(t,u) = \frac{1}{(tu)^{1/2}}\,
    \wt G_2\Big(\frac{x}t,\frac{x^3t^{3/2}}{u^{1/2}},\frac{x^3u^{3/2}}{t^{1/2}},\frac{x}u\Big),\\
  h_k(u) = \frac{1}{u^{k/3}}\wt H_k\Big(\frac{x^3}{u^{1/3}}, x^4 u^{4/3}, \frac{x}{u}\Big)\;\;(k=1,&2),\qquad
  h_3(s) = \wt H_3\Big(\frac{x^2}{s^{1/2}}, x^4 s, \frac{x^2}{s^{1/2}}\Big),\\
  j_1(x)=\wt J_1(x^4,x^4), \qquad\quad
  p(s,u) = 1\, - \,&\frac{x^2u^2}{s} - \frac{x}{u}, \qquad\quad r(t,u)=\Big(1-\frac{x}{t}\Big)\Big(1-\frac{x}{u}\Big).
\end{align*}

The combinatorial meaning of $f,g_k,h_k,j_1$ is the following.
Let $S$ be a shape contributing to the coefficient of a monomial $\mathbf m$ of one of these series. 
Then the exponent of $x$ in $\mathbf m$ is equal to $2\int_0^4\varphi(x)dx$ where $y=\varphi(x)$ is the equation of
the upper boundary of $S$. The variables $t,s,u$ correspond to the vertical lines $x=1$, $x=2$, $x=3$
respectively. If $(x_0,y_0)$, $(x_1,y_1)$, $(x_2,y_2)$ are three consecutive vertices on the upper boundary of $S$,
then the exponent of the variable corresponding to the vertical line $x=x_1$ is equal to the
integer part of the difference of the slopes of the adjacent segments, that is
$$
     \left\lfloor\frac{y_1-y_0}{x_1-x_0} - \frac{y_2-y_1}{x_2-x_1}\right\rfloor.
$$
In particular, $\mathbf m$ and the congruences in Figure~\ref{figFGHJ} determine the upper boundary of $S$
up to automorphism of $\mathbb Z^2$ of the form $(x,y)\mapsto(x,y+ax+b)$.
For example, the shapes for
$f_{3,2,3,1,2}$ and $g^{(1)}_{3,2,3,2}$
depicted in Figure~\ref{figFGHJ} contribute to
the coefficients of the monomials $x^{17}t^{-1}s^3u^{-3}$ and $x^{20}s^{-2}u^2$ in the series $f$ and $g_1$ respectively.

It follows that $f,g_k,h_k,j_1$ are elements of the ring
$$
    \Z[t^{\pm1},s^{\pm1},u^{\pm1}]((x))
$$
of formal power series in $x$ whose coefficients are Laurent polynomials in $t,s,u$.
The division of such power series will be understood as the division in this ring.

Applying the relation from \S\ref{sect.rec4} we obtain
\begin{align*}
f(t,s,u)\,q(t,s,u)
     &= \frac{xt}{s^{1/2}}\,\wt G_1\Big(\frac{x}{t}\,\frac{xt}{s^{1/2}},\;\frac{xt}{s^{1/2}}\,\frac{x^2s^2}{tu},\;\frac{x^2u^2}{s},\;
                                                                                                                   \frac{x}{u}\Big)\,p(s,u) \\
     &\quad + \frac{xs}{(tu)^{1/2}}\,\wt G_2\Big(\frac{x}{t},\,\frac{x^2t^2}{s}\,\frac{xs}{(tu)^{1/2}},\,\frac{xs}{(tu)^{1/2}}\,
                                                                                                 \frac{x^2u^2}{s},\,\frac{x}{u}\Big)\,r(t,u) \\
     &\quad + \frac{xu}{s^{1/2}}\,\wt G_1\Big(\frac{x}{u}\,\frac{xu}{s^{1/2}},\,\frac{xu}{s^{1/2}}\,\frac{x^2s^2}{tu},\,
                                                                                                 \frac{x^2t^2}{s},\,\frac{x}{t}\Big)\,p(s,t) \\
     &\quad - \frac{xt}{s^{1/2}}\,\frac{xu}{s^{1/2}}\,\wt H_3\Big(\frac{x}{t}\,\frac{xt}{s^{1/2}},\,
                  \frac{xt}{s^{1/2}}\,\frac{x^2s^2}{tu}\,\frac{xu}{s^{1/2}},\,\frac{xu}{s^{1/2}}\,\frac{x}{u}\Big).
\end{align*}
Given any Laurent series $\mathcal F(\xi,x_1,x_2,\dots)$, a monomial $\mathbf{m}=\mathbf{m}(x_1,x_2,\dots)$,
and a new variable $t$, we have (cf. \cite[Eq. (9)]{refOre2022})
\begin{equation}\label{change.var}
   \coef_{\xi^{-1}}\mathcal F(\xi,x_1,x_2,\dots) = \mathbf m\coef_{t^{-1}}t^{\alpha-1}\mathcal F(t^\alpha\mathbf m,x_1,x_2,\dots).
\end{equation}
Using \eqref{change.var} with the substitutions $\xi=xt/s^{1/2}$ (in $g_1$),  $\xi=xs/(tu)^{1/2}$ (in $g_2$),
$\xi=xt^{1/2}/u^{1/6}$ (in $h_1$), $\xi=xs^{1/2}/u^{1/3}$ (in $h_2$),
$\xi_1=xt/s^{1/2}$, $\xi=\xi_2=xu/s^{1/2}$ (in $h_3$), and $\xi=xu^{1/3}$ \hbox{(in $j_1$),}
we obtain
\begin{align*}
g_1(s,u)p(s,u)
   &= \frac1{s^{1/2}}\bigg\{p(s,u)\,\frac x{s^{1/2}}\coef_{t^{-1}}\wt F\Big(\frac{x^2}{s^{1/2}}\,\frac{s^{1/2}}{xt},
               \,\frac{x^2t^2}{s},\,\frac{s^{1/2}}{xt}\,\frac{x^3 s^{3/2}}{u},\,\frac{x^2u^2}{s},\frac{x}{u}\Big) \\
   &\quad -\frac{xu}{s^{1/2}}\,\frac x{s^{1/2}} \coef_{t^{-1}}\wt G_1\Big(\frac{x}{u}\,\frac{xu}{s^{1/2}},\,\frac{xu}{s^{1/2}}
            \,\frac{x^3s^{3/2}}{u}\,\frac{s^{1/2}}{xt},\,\frac{x^2t^2}s,\,\frac{s^{1/2}}{xt}\,\frac{x^2}{s^{1/2}}\Big)\\
   &\quad +\frac{xs^{1/2}}{u^{1/3}}\Big(1-\frac x u\Big)\wt H_1\Big(\frac{x^2}{s^{1/2}}\,\frac{xs^{1/2}}{u^{1/3}},\,
                                                            \frac{x^2s}{u^{2/3}}\,\frac{x^2u^2}{s},\,\frac x u\Big)\\
   &\quad
    +\frac{xu}{s^{1/2}}\wt H_3\Big(\frac{x^2}{s^{1/2}},\,\frac{x^3s^{3/2}}{u}\,\frac{xu}{s^{1/2}},\,\frac{xu}{s^{1/2}}\,
    \frac x u\Big)\bigg\},
\end{align*}
\begin{align*}
g_2(t,u)r(t,u)
  &=\!\frac1{\!(tu)^{1/2}\!}\bigg\{ r(t,u)\frac x{(tu)^{1/2}\!\!}\coef_{s^{-1}}\!\wt F\Big(\frac x t,\frac{x^3t^{3/2}}{u^{1/2}}\,
     \frac{(tu)^{1/2}}{xs}, \frac{x^2s^2}{tu}, \frac{(tu)^{1/2}}{xs}\,\frac{x^3u^{3/2}}{t^{1/2}}, \frac x u\Big)\\
  &\quad + \frac{xt^{1/2}}{u^{1/6}}\Big(1-\frac x u\Big)\wt H_2\Big(\frac x t\,\frac{x^2t}{u^{1/3}},\,\frac{xt^{1/2}}{u^{1/6}}\,
                   \frac{x^3u^{3/2}}{t^{1/2}},\,\frac x u\Big)\\
  &\quad + \frac{xu^{1/2}}{t^{1/6}}\Big(1-\frac x t\Big)\wt H_2\Big(\frac x u\,\frac{x^2u}{t^{1/3}},\,\frac{xu^{1/2}}{t^{1/6}}\,
          \frac{x^3t^{3/2}}{u^{1/2}},\,\frac x t\Big)\bigg\},
\end{align*}
$$
h_1(u) = \frac1{u^{1/3}}\left\{\frac x{u^{1/6}} \coef_{t^{-1}} \frac1{t^{1/2}}\wt G_2\Big(\frac{x^3}{u^{1/3}}\,\frac{u^{1/3}}{x^2t},\,
                 \frac{x^3t^{1/3}}{u^{1/2}},\,\frac{u^{1/6}}{xt^{1/2}}\,x^4u^{4/3},\,\frac x u\Big) + \frac{u^{1/3}}{x^3(1-x/u)} \right\},
$$
$$
h_2(u) = \frac1{u^{2/3}}\left\{\frac x{u^{1/3}}\coef_{s^{-1}}\frac1{s^{1/2}}\wt G_1\Big(\frac{x^3}{u^{1/3}}\,
         \frac{u^{1/3}}{xs^{1/2}},\, \frac{x^3s^{3/2}}u,\,\frac{u^{2/3}}{x^2s}\,x^4u^{4/3},\,\frac x u\Big)\right\},
$$
\begin{align*}
h_3(s) &= \frac{2x}{s^{1/2}}\coef_{u^{-1}}\wt G_1\Big(\frac{x^2}{s^{1/2}},\, x^4s\,\frac{s^{1/2}}{xu},\,
                                             \frac{x^2u^2}s,\, \frac{s^{1/2}}{xu}\,\frac{x^2}{s^{1/2}}\Big)\\
   &\quad - \frac{x^2}s \coef_{t^{-1}u^{-1}}\wt F\Big(\frac{x^2}{s^{1/2}}\,\frac{s^{1/2}}{xt},\,\frac{x^2t^2}s,\,
          \frac{s^{1/2}}{xt}\,x^4s\,\frac{s^{1/2}}{xu},\,\frac{x^2u^2}s,\,  \frac{s^{1/2}}{xu}\,\frac{x^2}{s^{1/2}}\Big),
\end{align*}
$$
    j_1(x) = x\coef_{u^{-1}}\frac1{u^{2/3}}\wt H_2\Big(x^4\,\frac{1}{x u^{1/3}},\, x^4u^{4/3},\, \frac{1}{x^3u}\,x^4\Big).
$$
Thus,
\begin{equation}\label{t4.f.init}
f(t,s,u)\,q(t,s,u) = xt\, g_1(s,u)p(s,u) + xs\, g_2(t,u)r(t,u)+xu\, g_1(s,t)p(s,t)-\frac{x^2tu}{s} h_3(s),
\end{equation}
\begin{equation}\label{t4.g1.init}
g_1(s,u)\,p(s,u) = \frac x s\, p(s,u)\coef_{t^{-1}} f(t,s,u) - \frac{x^2u}s \coef_{t^{-1}} g_1(s,t)
                  + \frac{x(u-x)}u h_1(u) + \frac{xu}s h_3(s),
\end{equation}
\begin{equation}\label{t4.g2.init}
g_2(t,u) = \frac x{tu}\coef_{s^{-1}} f(t,s,u) + \frac{xt}{t-x}h_2(u) + \frac{xu}{u-x}h_2(t),
\end{equation}
\begin{equation}\label{t4.h1.init}
h_1(u) = x\coef_{t^{-1}}g_2(t,u) + \frac u{x^3(u-x)},
\end{equation}
\begin{equation}\label{t4.h2.init}
h_2(u) = \frac x u\coef_{s^{-1}}g_1(s,u),
\end{equation}
\begin{equation}\label{t4.h3.init}
h_3(s) = 2x \coef_{u^{-1}}g_1(s,u) - \frac{x^2}s\coef_{t^{-1}u^{-1}} f(t,s,u),
\end{equation}
\begin{equation}\label{t4.j1.init}
j_1(x) = x \coef_{u^{-1}}h_2(u).
\end{equation}


\subsection{ Elimination of $f$, $h_1$, and $h_2$ }\label{sect.elim4}

Let
\begin{xalignat*}{2}
  &\Phi_1(s,u) = \coef_{t^{-1}}\frac t{q(t,s,u)},            &&\Psi_1(s,u)=1 - \frac{x^2}s p(s,u)\Phi_1(s,u),\\
  &\Phi_2(t,u) = \coef_{s^{-1}}\frac s{q(t,s,u)},            &&\Psi_2(t,u)=1 - \frac{x^2}{tu}r(t,u)\Phi_2(t,u),\\
  &\Phi_{3}(s)   = \coef_{t^{-1}u^{-1}}\frac{tu}{q(t,s,u)}, &&\Psi_{3}(s) = 1 - \frac{x^4}{s^2}\Phi_{3}(s).
\end{xalignat*}

\begin{remark} A {\tt Wolfram Mathematica} code
that checks the identities \eqref{t4.g1} -- \eqref{t4.j1} below up to $O(x^n)$ is available at
  \url{https://www.math.univ-toulouse.fr/~orevkov/tr45.html}.
\end{remark}

We eliminate $f$, $h_1$, $h_2$ from \eqref{t4.f.init}--\eqref{t4.h3.init} by plugging
\eqref{t4.f.init}, \eqref{t4.h1.init}, \eqref{t4.h2.init} into
\eqref{t4.g1.init}, \eqref{t4.g2.init}, \eqref{t4.h3.init}.
After a simplification we obtain the following system of equations for $g_1$, $g_2$, and $h_3$:
\begin{equation}\label{t4.g1}
\begin{split}
   \Psi_1(s,u)g_1(s,u) &= \frac{x^2u}{s}\coef_{t^{-1}}\left(\frac{p(s,t)}{q(t,s,u)} - \frac1{p(s,u)}\right)g_1(s,t) \\
                       &\quad + \frac{x^2(u-x)}{u}\coef_{t^{-1}}\left(\frac{t-x}{t\,q(t,s,u)} + \frac1{p(s,u)}\right)g_2(t,u)\\
                       &\quad + \frac{xu\Psi_1(s,u)}{s\, p(s,u)} h_3(s) \;+\; \frac1{x^2 p(s,u)},
\end{split}
\end{equation}

\begin{equation}\label{t4.g2}
\begin{split}
   \Psi_2(t,u)g_2(t,u)
      &=  \frac{x^2}{u}\coef_{s^{-1}}\left(\frac{p(s,u)}{q(t,s,u)} + \frac t{t-x}\right)g_1(s,u) \\
      & + \frac{x^2}{t}\coef_{s^{-1}}\left(\frac{p(s,t)}{q(t,s,u)} + \frac u{u-x}\right)g_1(s,t)
      \; - \; \coef_{s^{-1}} \frac{x^3 h_3(s)}{s\,q(t,s,u)},
\end{split}
\end{equation}

\begin{equation}\label{t4.h3}
   \Psi_{3}(s)h_3(s) = 2x\coef_{u^{-1}} \Psi_1(s,u) g_1(s,u) \;-\; x^3\coef_{t^{-1}u^{-1}}\frac{r(t,u)}{q(t,s,u)}\,g_2(t,u).
\end{equation}

Eliminating $h_2(u)$ from \eqref{t4.h2.init} and \eqref{t4.j1.init} we express $j_1$ via $g_1$:
\begin{equation}\label{t4.j1}
                                j_1(x) = x^2 \coef_{s^{-1}u^{-1}} \frac{g_1(s,u)}u.
\end{equation}


\subsection{Estimates for the radii of convergence}\label{sect.converge4}
Let $\T$ be the unit circle in $\C$ centered at $0$.
For a series $\mathcal F\in\Z[t^{\pm1},s^{\pm1},u^{\pm1}]((x))$ we define its {\it $x$-radius of convergence} denoted by $R_x(\mathcal F)$
as the supremum of positive $x_0$ such that $\mathcal F$ converges in a neighborhood of the set
$[0,x_0]\times\T^3$ in $\C^4$. If all the coefficients of $\mathcal F$ are positive,
then $R_x(\mathcal F)$ coincides with the radius of convergence of $\mathcal F(x,1,1,1)$.

\begin{lemma}\label{lem.pos.coef4}
 All coefficients of $f$, $g_k$, $h_k$, $j_1$, $\Phi_k$,
 $1/q$, $1/p$, $1/r$, $1/(p\Psi_1)$, $1/(r\Psi_2)$, $1/\Psi_{3}$
 are positive,
 and $R_x(1/q)=1/2$, $R_x(1/p)=(-1+\sqrt5)/2\approx 0.618$, $R_x(1/r)=1$.
\end{lemma}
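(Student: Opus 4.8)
The plan is to separate the statement into two essentially independent parts: (1) positivity of all the listed series, and (2) the three explicit radii of convergence $R_x(1/q)=1/2$, $R_x(1/p)=(-1+\sqrt5)/2$, $R_x(1/r)=1$. The second part is the easy one. Since $q(t,s,u)=Q(x/t,x^2t^2/s,x^2s^2/(tu),x^2u^2/s,x/u)$ with $Q(x,y,z,w,v)=1-x-y-z-w-v+\dots$ being a polynomial with $Q(0,\dots,0)=1$, the series $1/q$ lies in $\Z[t^{\pm1},s^{\pm1},u^{\pm1}]((x))$. Its $x$-radius of convergence over $\T^3$ is the smallest $x_0>0$ for which $q(t,s,u)$ has a zero with $(t,s,u)\in\T^3$ and $x\in[0,x_0]$. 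Because (as will follow from part (1)) the coefficients of $1/q$ are nonnegative, the worst case is $t=s=u=1$, so $R_x(1/q)$ is the radius of convergence of $1/Q(x,x^2,x^2,x^2,x)=1/q(1,1,1)$; substituting and simplifying $Q$ along this diagonal gives a rational function whose first pole is at $x=1/2$. The same diagonal-evaluation argument applies to $p(s,u)=1-x^2u^2/s-x/u$: at $s=u=1$ it becomes $1-x-x^2$, whose first positive root is $(-1+\sqrt5)/2$, and to $r(t,u)=(1-x/t)(1-x/u)$, which at $t=u=1$ is $(1-x)^2$ with first root $1$. In each case one must check that no point of $\T^3$ gives a smaller modulus of a root than the diagonal point; this is exactly what the nonnegativity of the coefficients of $1/q$, $1/p$, $1/r$ buys us, via the standard Pringsheim-type fact that a power series with nonnegative coefficients has a singularity at its radius of convergence on the positive real axis.

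For part (1), the positivity of the combinatorial generating functions $f$, $g_k$, $h_k$, $j_1$ is immediate: by the combinatorial interpretation in \S\ref{sect.ch.var4}, each is obtained from $\wt F,\wt G_k,\wt H_k,\wt J_1$ (which have nonnegative integer coefficients, being counts of triangulations) by a monomial substitution $x_i\mapsto$ (monomial in $x,t,s,u$) possibly followed by multiplication by a monomial, and such operations preserve nonnegativity of coefficients. For $\Phi_k$ one uses $\Phi_1(s,u)=\coef_{t^{-1}}\big(t/q(t,s,u)\big)$: since $1/q$ has nonnegative coefficients (to be shown) and $t$ is a monomial, extracting a single Laurent coefficient in $t$ again leaves a series with nonnegative coefficients; similarly for $\Phi_2,\Phi_3$. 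So the heart of the matter is the positivity of $1/q$, $1/p$, $1/r$, and of the three ``resolvent-type'' quantities $1/(p\Psi_1)$, $1/(r\Psi_2)$, $1/\Psi_3$.

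For $1/p$ and $1/r$ this is direct: $1/p=\sum_{k\ge0}(x^2u^2/s+x/u)^k$ and $1/r=\sum_{i,j\ge0}(x/t)^i(x/u)^j$ are manifestly nonnegative. For $1/q$ one cannot just expand $Q$ naively since $Q$ has negative coefficients; instead I would argue combinatorially: $\coef$ of $x^n$ in $1/q$, or rather in a suitable ``building-block'' generating function, counts certain configurations, so it is nonnegative — concretely, $1/q$ should be (a specialization of) a generating function counting triangulations of a single vertical strip of some bounded combinatorial type, built up column by column, which is how $Q$ arose as a transfer-type relation in \cite[Lemma 2.2]{refOre2022}. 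Alternatively, and more robustly, one writes $q=1-xA$ where $A=A(x,t,s,u)$ is a Laurent polynomial in $t,s,u$ with, after regrouping the transfer relation, nonnegative coefficients as a series in $x$; then $1/q=\sum_k x^kA^k$ is nonnegative. For the resolvent quantities the strategy is the same but one layer up: $p\Psi_1=p-x^2p^2\Phi_1/s\cdot$(correction) — more precisely one rewrites equation \eqref{t4.g1} so that $p\Psi_1$ (resp.\ $r\Psi_2$, $\Psi_3$) appears as $1$ minus an operator built from $\coef$ of quotients all of which have nonnegative coefficients, and then one expands $1/(1-(\text{nonnegative series with zero constant term}))$ as a geometric series. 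The key sub-claim is that the relevant operator has zero constant term in $x$, i.e.\ $\Psi_1=1+O(x)$, $\Psi_2=1+O(x)$, $\Psi_3=1+O(x)$, which is clear from the definitions since $\Phi_k=O(x^?)$ with the prefactor carrying extra powers of $x$.

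The main obstacle is the positivity of $1/q$ and, relatedly, of $1/(p\Psi_1)$, $1/(r\Psi_2)$, $1/\Psi_3$: one has to identify the correct combinatorial/transfer-matrix interpretation of $q$ (going back to \cite[Lemma 2.2]{refOre2022}) that exhibits $q=1-x\cdot(\text{something nonnegative})$, rather than relying on the unhelpful closed form $Q=1-x-y-z-w-v+xz+\dots-xzv$. Everything else is bookkeeping: once nonnegativity is in hand, the three radii follow from Pringsheim's theorem plus evaluation on the diagonal $t=s=u=1$, where $q,p,r$ degenerate to $1-2x+O(x^2)$-type rational/polynomial expressions with the claimed first singularities at $1/2$, $(-1+\sqrt5)/2$, and $1$.
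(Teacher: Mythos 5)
Your treatment of the radii, of $f,g_k,h_k,j_1$, of $\Phi_k$, of $1/p$ and $1/r$, and of $1/\Psi_3$ (geometric series in $\frac{x^4}{s^2}\Phi_3$, using Pos$(\Phi_3)$) all match the paper. For $1/q$ your primary route is also the paper's: $1/Q=F_0$, the generating function counting triangulations in which every non-vertical edge projects to a unit segment (this is \cite[Example 2.3]{refOre2022}), so $1/q=f_0$ has positive coefficients; your ``more robust'' fallback $q=1-xA$ with $A$ nonnegative does \emph{not} work, since $Q$ has an alternating (inclusion--exclusion type) sign pattern — e.g.\ the terms $+xz,+xw,\dots$ — so $1-q$ is not a nonnegative series.

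The genuine gap is in your argument for $1/(p\Psi_1)$ and $1/(r\Psi_2)$. You propose to write each as $1/(1-(\text{nonnegative series with zero constant term}))$ and expand geometrically, but $1-p\Psi_1=(1-p)+\frac{x^2}{s}\,p^2\,\Phi_1$ is not manifestly nonnegative: $p=1-\frac{x^2u^2}{s}-\frac xu$ itself has negative coefficients, hence so does $p^2$, and the same problem occurs for $1-r\Psi_2$ (already $1-r=\frac xt+\frac xu-\frac{x^2}{tu}$ has a negative term). This is not a presentational issue — the paper's own Remark notes that positivity of $1-\Psi_k$ and $1/\Psi_k$ is only observed numerically, not proved — so no expansion of this type is available. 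The paper's actual argument is a second instance of the combinatorial trick you used for $1/q$, one level up: it defines restricted counts $F_1,G_{11}$ (edges projecting onto $[0,2]$ or unit segments, bottom $[(0,-1),(2,0)]\cup[(2,0),(4,0)]$) and $F_2,G_{22}$ (edges projecting onto $[1,3]$ or unit segments, bottom $[(0,-1),(1,0),(3,1),(4,1)]$); running the recurrence and change of variables for these restricted families collapses \eqref{t4.g1} and \eqref{t4.g2} to the closed identities $p\,\Psi_1\,g_{11}=1/x^2$ and $r\,\Psi_2\,g_{22}=x^3$, so $1/(p\Psi_1)=x^2g_{11}$ and $1/(r\Psi_2)=g_{22}/x^3$ are positive because $g_{11},g_{22}$ count triangulations. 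You correctly flag that ``the correct combinatorial/transfer-matrix interpretation'' is the obstacle, but you neither supply it nor have a working substitute, so this part of the lemma remains unproved in your write-up.
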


\begin{proof} Let ``Pos($\mathcal F$)'' mean ``all coefficients of $\mathcal F$ are positive''.
It is enough to prove  Pos($1/q$), Pos$(1/(p\Psi_1))$, and Pos$(1/(r\Psi_2))$ because
Pos$(1/q)\Rightarrow\text{Pos}(\Phi_k)$, Pos$(\Phi_3)\Rightarrow\text{Pos}(1/\Psi_3)$, and the statement
for the other series is evident.

\smallskip
Pos($1/q$).
Let $F_0(x,y,z,w,v)$ be defined as $F$ but counting only the primitive triangulation such that
the projection of any non-vertical edge to the horizontal axis has length 1, and the bottom of
the shapes is the segment $[(0,0),(4,0)]$. Let $f_0$ be obtained from $F_0$ by the substitutions in \S\ref{sect.ch.var4}.
Then (see \cite[Example 2.3]{refOre2022}) we have $F_0=1/Q$.
Hence $1/q=f_0$ has all positive coefficients.
Since $q(1,1,1)=(1-2x)(1-2x^2)$, we have $R_x(1/q)=1/2$.

\smallskip 
Pos$(1/(p\Psi_1))$. 
Let $F_1$ and $G_{11}$ be defined as $F$ and $G_1$ but counting only the primitive triangulations such that
the projection of any non-vertical edge to the horizontal axis is either the segment $[0,2]$ or a segment of length 1,
and the bottom of the shapes is the union of the segments $[(0,-1),(2,0)]\cup[(2,0),(4,0)]$.
Then we have (cf.~\S\ref{sect.rec4})
\begin{align*}
    F_1(x,y,z,w,v)Q(x,y,z,w,v) &= y^{1/2}G_{11}(xy^{1/2},y^{1/2}z,w,v)(1-w-v),\\
    G_{11}(x,z,w,v)(1-w-v) &= \coef_{\xi^{-1}} F_1\Big(\frac x\xi,\xi^2,\frac z\xi,w,v\Big)(1-w-v) + \frac1{x}.
\end{align*}
Let $f_1$ and $g_{11}$ be obtained
from $F_1$ and $G_{11}$ by the substitutions in \S\ref{sect.ch.var4}. Then the analogue of 
\eqref{t4.g1} takes the form $p\,\Psi_1\,g_{11}=1/x^2$, hence the coefficients of $1/(p\Psi_1)$ are positive.

\smallskip
Pos$(1/(r\Psi_2))$.
Let $F_2$ and
$G_{22}$ be defined as $F$ and 
$G_2$ but counting only the primitive triangulations such that
the projection of any non-vertical edge to the horizontal axis is either the segment $[1,3]$ or a segment of length 1, and the bottom of
the shapes is the polygonal chain $[(0,-1),(1,0),(3,1),(4,1)]$. Let $f_2$ and 
$g_{22}$ be obtained from 
$F_2$ and 
$G_{22}$ by the substitutions in \S\ref{sect.ch.var4}. Proceeding as in \S\S\ref{sect.rec4}--\ref{sect.ch.var4}, we obtain
$r\,\Psi_2\, g_{22}=x^3$, whence the result.
\end{proof}

\begin{remark}
All the coefficients of $1-\Psi_k$ and $1/\Psi_k$ that we computed are also positive.
\end{remark}

\begin{lemma}\label{lem.2roots.4}
(a). For any $x\in[0,\frac12)$ the function $q(t,s,u)$ does not vanish on $\T^3$.

(b).
For any fixed $(x,s,u)\in(0,\frac12)\times\T^2$, the equation $q(t,s,u)=0$
has two roots $t_1(x,s,u),t_2(x,s,u)$ in the disk $|t|<1$ and two roots in its complement.

For any fixed $(x,t,u)\in(0,\frac12)\times\T^2$, the equation $q(t,s,u)=0$
has two roots $s_1(x,t,u),s_2(x,t,u)$ in the disk $|s|<1$ and two roots in its complement.

(c).
One has
$$
  \Phi_1(s,u)
  = \frac1{2\pi i}\int_\T \frac{t\,dt}{q(t,s,u)}
   =  \frac{t_1}{q'_t(t_1,s,u)} + \frac{t_2}{q'_t(t_2,s,u)},
$$
$$
  \Phi_2(t,u)
  = \frac1{2\pi i}\int_\T \frac{s\,ds}{q(t,s,u)}
   =  \frac{s_1}{q'_s(t,s_1,u)} + \frac{s_2}{q'_s(t,s_2,u)},
$$
$$
   \Phi_{3}(s)
    = \frac1{(2\pi i)^2}\int_{\T^2}\frac{t\hskip 0.8pt u\,dt\,du}{q(t,s,u)}
    = \frac1{2\pi i}\int_\T \Phi_1(s,u)u\,du.
$$
\end{lemma}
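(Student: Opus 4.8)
The plan is to prove the three parts in the order (a), (b), (c), since (a) is needed for (c) to make sense (the integrals over $\T^3$ or $\T$ are taken along contours on which $q$ does not vanish) and (b) is what identifies those integrals with finite sums of residues.

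For part (a), I would first recall from Lemma~\ref{lem.pos.coef4} that $1/q \in \Z[t^{\pm1},s^{\pm1},u^{\pm1}]((x))$ has all positive coefficients and $R_x(1/q) = 1/2$. By the definition of $R_x$ in \S\ref{sect.converge4}, this means $1/q$ converges (absolutely) in a neighborhood of $[0,x_0]\times\T^3$ for every $x_0 < 1/2$; in particular, for each fixed $x \in [0,1/2)$ the rational function $1/q(t,s,u)$ is finite — hence $q(t,s,u)\neq 0$ — on all of $\T^3$. (One must separately note $q(t,s,u)\neq 0$ near $x = 0$, which is immediate since $q(t,s,u)\to 1$ as $x\to0$ uniformly on $\T^3$.) This disposes of (a).

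For part (b), I would regard $q(t,s,u)$ for fixed $(x,s,u)\in(0,\tfrac12)\times\T^2$ as a polynomial in $t$; inspecting the formula for $Q$ and the substitution in \S\ref{sect.ch.var4} shows that, after clearing the denominator $s$, it is (a unit times) a polynomial of degree $4$ in $t$, so there are $4$ roots counted with multiplicity. By part (a) none of them lies on $\T$, so the number $N(x,s,u)$ of roots inside $|t|<1$ is well-defined; by the argument principle $N(x,s,u) = \frac{1}{2\pi i}\int_\T \frac{q'_t(t,s,u)}{q(t,s,u)}\,dt$, which is a continuous $\Z$-valued function on the connected set $(0,\tfrac12)\times\T^2$, hence constant. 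To evaluate the constant I would take the limit $x\to 0^+$: then $q(t,s,u)\to 1$ so all genuine roots escape, but the degree-$4$ polynomial $s\cdot q$ degenerates and two of its four roots go to $0$ while two go to $\infty$ — this can be seen from the structure of the substitution (the variable $t$ enters $Q$ through the combinations $x/t$ and $x^2t^2/s$, so as $x\to0$ two roots scale like $x$ and two like $x^{-1}$). Hence $N\equiv 2$. The statement for $s$ is obtained by the same argument (or by the symmetry $t\leftrightarrow u$, $x$-variables reversed, built into the $\wt{}$-symmetrizations, which makes $q$ symmetric under the corresponding involution).

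For part (c), I would evaluate each coefficient as a contour integral. By the positivity and convergence just used, for $x\in(0,\tfrac12)$ we have $\coef_{t^{-1}}\frac{t}{q(t,s,u)} = \frac{1}{2\pi i}\int_\T \frac{t\,dt}{q(t,s,u)}$, and likewise for the other two (the double coefficient becomes an iterated integral over $\T^2$ by Fubini, justified by absolute convergence). Then I apply the residue theorem on $|t|<1$: the integrand $t/q(t,s,u)$ has exactly the two simple poles $t_1,t_2$ inside (simple because, generically, the degree-$4$ polynomial has distinct roots — and for the finitely many $(x,s,u)$ where a double root could occur one passes to the limit by continuity of both sides), so $\int_\T = 2\pi i\bigl(\operatorname{Res}_{t_1} + \operatorname{Res}_{t_2}\bigr) = 2\pi i\bigl(\tfrac{t_1}{q'_t(t_1,s,u)} + \tfrac{t_2}{q'_t(t_2,s,u)}\bigr)$, giving the first identity; the second is identical in the variable $s$. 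For the third, $\Phi_3(s) = \frac{1}{(2\pi i)^2}\int_{\T^2}\frac{tu\,dt\,du}{q(t,s,u)}$; doing the inner $t$-integral first recognizes $\frac{1}{2\pi i}\int_\T \frac{t\,dt}{q(t,s,u)} = \Phi_1(s,u)$, leaving $\Phi_3(s) = \frac{1}{2\pi i}\int_\T \Phi_1(s,u)\,u\,du$, which is the claimed formula (and equivalently $= \coef_{u^{-1}}u\,\Phi_1(s,u) = \coef_{t^{-1}u^{-1}}\frac{tu}{q}$).

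The main obstacle is the root-counting in part (b): one must correctly determine the polynomial degree of $q$ in $t$ after clearing denominators, verify that exactly two roots are inside the unit disk, and handle the bookkeeping of which roots degenerate as $x\to0$. Everything else — the convergence statements from Lemma~\ref{lem.pos.coef4}, the argument principle, and the residue computation — is routine once (b) is in place; the only minor care needed elsewhere is the standard continuity argument to cover the measure-zero set of parameters where $q$ might have a repeated root in $t$ or $s$.
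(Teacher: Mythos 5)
Your proof is correct and follows essentially the same route as the paper's own (very terse) proof: (a) is deduced from Lemma~\ref{lem.pos.coef4}, (b) from the constancy of the root count on a connected parameter set together with an explicit check for small $x$, and (c) from the residue formula for the Cauchy integrals; you simply supply the details the paper omits. Two trivial slips worth noting: in (b) the denominator to clear in the variable $t$ is $t^2$ (not $s$), and the involution $t\leftrightarrow u$ fixes $s$, so it cannot by itself yield the $s$-root count --- but your primary argument (``the same argument'' applied to $s$, where $s^2q$ is again a degree-$4$ polynomial) handles that case correctly.
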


\begin{proof} (a). Follows from Lemma \ref{lem.pos.coef4}.

(b). 
By (a), the number of roots in the unit disk is constant on $[0,\frac12)\times\T^2$, and
one easily checks that it is equal to $2$ at $(x,1,1)$ for a small $x$.

(c). Follows from (b) by the residue formula for the Cauchy integrals.
\end{proof}

\begin{remark}
The same arguments give a computation-free proof of Lemma 4.1 in \cite{refOre2022}.
\end{remark}

Recall that $R_x(J^*)$ is denoted in \S\ref{sect.trapez4} by $\beta_4$.

\begin{lemma}\label{lem.Rx4}
(a). The $x$-radii of convergence of the series $f$, $g_k$, $h_k$, $j_1$
are greater than $\beta_4^{1/4}$, and we have $\beta_4>0$.

(b). We have $R_x(\Phi_1)=R_x(\Phi_2)=R_x(\Phi_3)=1$ and
\begin{equation}                     \label{Rx(Psi)}
   R_x(\Psi_1)= 0.495375..., \qquad
   R_x(\Psi_2)= 0.495455..., \qquad
   R_x(\Psi_{3})= 0.499999...
\end{equation}
\end{lemma}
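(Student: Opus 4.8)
The plan is to treat the two parts separately, using the positivity of coefficients established in Lemma \ref{lem.pos.coef4} so that every $x$-radius of convergence is simply the radius of convergence of the corresponding one-variable series obtained by setting $t=s=u=1$ (or the relevant subset of variables equal to $1$). For part (a), the combinatorial interpretation of $f,g_k,h_k,j_1$ from \S\ref{sect.ch.var4} shows that each of these series counts (sub)shapes of height $4$ with an $x$-exponent equal to twice the integral of the upper boundary, so that the coefficient of $x^N$ in, say, $f(1,1,1)$ is bounded above by a polynomial multiple of the number of primitive triangulations of a strip of width $4$ and horizontal length $O(N)$. Concretely, specializing $t=s=u=1$ collapses the three ``slope-defect'' gradings and leaves a power series in $x$ whose $x^{N}$-coefficient is dominated by $\sum_{a+e\le c N} j^*_{a,e}$ for a fixed constant $c$; since $J^*(x)=\sum j^*_n x^n$ has radius of convergence $\beta_4>0$ (this is where we invoke that $\beta_4=R_x(J^*)>0$, which follows from $\lim (j^*_{2n})^{1/n}=\lim f(4,n)^{1/n}<\infty$ together with \eqref{eq.trapez4}), we get $R_x(f)\ge\beta_4^{1/4}$, and the same domination argument (each $g_k,h_k,j_1$ counts a subclass of shapes, so its generating function is coefficientwise bounded by that of $f$, up to an overall monomial factor coming from the normalizing powers of $t,s,u$ in \S\ref{sect.ch.var4}) gives the bound for the remaining series. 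The inequality $\beta_4>0$ itself is immediate from the convergence of the limit $\lim_n f(4,n)^{1/n}$, proven in \cite{refKZ}.

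For part (b), the claim $R_x(\Phi_1)=R_x(\Phi_2)=R_x(\Phi_3)=1$ follows from Lemma \ref{lem.2roots.4}(c): for $x\in[0,\tfrac12)$ each $\Phi_k$ is given by a Cauchy integral of $t/q$, $s/q$, or $tu/q$ over $\T^k$, and by Lemma \ref{lem.2roots.4}(a) the integrand is analytic in a neighborhood of $[0,\tfrac12)\times\T^k$, hence each $\Phi_k$ extends analytically at least to the disc $|x|<\tfrac12$; but in fact the only obstruction to analyticity is a zero of $q$ on $\T^3$, and by the positivity of the coefficients of $1/q$ together with $q(1,1,1)=(1-2x)(1-2x^2)$ one checks that $q$ first vanishes on $\T^3$ (at $t=s=u=1$) only when $x$ passes $\tfrac12$ — wait, this would give radius $\tfrac12$, not $1$; the point is rather that $\Phi_k$ is a \emph{residue sum} $\sum_i t_i/q'_t(t_i,s,u)$, which is a finite sum of algebraic functions with no pole at $x=\tfrac12$ because at $x=\tfrac12$ the two roots inside $|t|<1$ do not collide with the two outside (one verifies this via the explicit factorization $q(1,1,1)=(1-2x)(1-2x^2)$ and continuity of roots), so the genuine singularity of $\Phi_k(x,1,\dots,1)$ is pushed out to $|x|=1$; I would pin this down by exhibiting, for each $k$, the dominant singularity of the algebraic function $\Phi_k(x,1,1,1)$ and checking it equals $1$. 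For the three values in \eqref{Rx(Psi)}, since $\Psi_1=1-\tfrac{x^2}{s}\,p(s,u)\Phi_1$ etc.\ are explicit (algebraic) functions of $x$ once $s,u\in\T$ are fixed, and since $1/(p\Psi_1)$ and $1/(r\Psi_2)$ have positive coefficients (Lemma \ref{lem.pos.coef4}), the radius $R_x(\Psi_k)$ equals the radius of $\Psi_k(x,1,\dots,1)$, which is the smallest positive $x$ at which $\Psi_k(x,1,\dots,1)$ vanishes (a zero of $\Psi_k$ forces a singularity of $1/(p\Psi_k)$, and conversely $\Psi_k$ is analytic past that point). One then solves $\Psi_k(x,1,\dots,1)=0$ numerically — these are the roots $0.495375\ldots$, $0.495455\ldots$, $0.499999\ldots$ — and verifies that $p,r$ do not vanish there (they don't, since $R_x(1/p)\approx0.618$ and $R_x(1/r)=1$ exceed these values), so the zero of $\Psi_k$ is genuinely the dominant singularity.

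The main obstacle is the analytic-continuation step for $\Phi_k$: one has to argue rigorously that the residue-sum expression $\sum_i t_i(x,s,u)/q'_t(t_i(x,s,u),s,u)$, valid a priori only for $x\in[0,\tfrac12)$, continues analytically to the disc $|x|<1$ — i.e.\ that no two roots of $q(\cdot,s,u)$ straddling $\T$ collide, and no root reaches $0$ or $\infty$, for $x\in[0,1)$ and $s,u\in\T$. I expect this to follow from a resultant/discriminant computation: the bad $x$ are among the zeros of $\operatorname{Res}_t\big(q(t,s,u),q'_t(t,s,u)\big)$ and of the leading/trailing coefficients of $q$ in $t$, viewed as functions on $[0,1)\times\T^2$, and one checks these stay away from that region (for $\Phi_3$ one additionally needs the intermediate integral $\int_\T\Phi_1(s,u)u\,du$ to inherit the radius, which is automatic once $\Phi_1$ is analytic on $|x|<1$ uniformly for $u\in\T$). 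The positivity of the coefficients of $1-\Psi_k$ noted in the Remark after Lemma \ref{lem.pos.coef4} is not logically needed but is consistent with the picture: $R_x(\Psi_k)$ is then the first sign-change-free zero of a monotone-like function, which is what makes the numerical values in \eqref{Rx(Psi)} well-defined and easy to certify.
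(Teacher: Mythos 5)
Your part (a) follows the paper's first step (positivity of coefficients reduces $R_x$ to the radius of the one\--variable specialization at $t=s=u=1$, and the $x$\--exponent is comparable to the area of the shape), but your domination by partial sums of $J^*$ can only yield $R_x\ge\beta_4^{1/4}$, whereas the lemma asserts a \emph{strict} inequality. The paper gets strictness by showing, as in \cite[Lemma 3.2]{refOre2022}, that the radius of convergence of each of $f(1,1,1)$, $g_k(1,1)$, $h_k(1)$, $j_1$ coincides with that of $J(x^4)$ --- note $J$, not $J^*$: all these series count shapes \emph{without} side-to-side edges, and since $J^*=1/(1-J)$ with $J$ having nonnegative coefficients and $J(\beta_4)=1<\infty$, the radius of $J$ strictly exceeds $\beta_4=R_x(J^*)$. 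So replace your comparison series $J^*$ by $J$; as written, your bound does not prove the stated inequality.

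Part (b) is where the proposal genuinely fails. You correctly sensed the tension (``this would give radius $\tfrac12$, not $1$'') but resolved it in the wrong direction: at $x=\tfrac12$ the roots of $q(\cdot,s,u)$ straddling $\T$ \emph{do} collide --- already at $s=u=1$ one has $q(1,1,1)=(1-2x)(1-2x^2)$, so a root reaches $t=1\in\T$ as $x\to\tfrac12$, and the paper's proof derives the genuine singularities $\Phi_k(1,1)\sim C_k(1-2x)^{-1/2}$ ($k=1,2$) and $\Phi_3(1)\sim C_3\log(1-2x)$ at $x=\tfrac12$. The resultant/discriminant computation you propose would detect exactly this collision, so the continuation to $|x|<1$ you hope for does not exist; the paper in fact only establishes, and only uses, the lower bound $R_x(\Phi_k)\ge\tfrac12$ (the displayed value $1$ is not what its own argument gives). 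For the quantities in \eqref{Rx(Psi)} your computation does agree with the paper's: the numbers are the first positive zeros of $p\Psi_1$, $r\Psi_2$, $\Psi_3$ at $t=s=u=1$, i.e.\ the $x$\--radii of $1/(p\Psi_1)$, $1/(r\Psi_2)$, $1/\Psi_3$. But your justification is internally inconsistent ($\Psi_k$ itself converges wherever $\Phi_k$ does, so ``the radius of $\Psi_k(x,1,\dots,1)$ is its smallest positive zero'' contradicts your own parenthetical that $\Psi_k$ is analytic past that point), and the numerics are certified in the paper not vaguely but by the monotonicity of $p\Psi_1$, $r\Psi_2$, $\Psi_3$ on $[0,\tfrac12)$ coming from Lemma \ref{lem.pos.coef4}, together with the exact residue formulas of Lemma \ref{lem.2roots.4}(c); you should use that in place of ``easy to certify''.
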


\begin{figure}\label{fig.Psi}
	\centering
		\includegraphics[width=66mm]{psi3-graph.eps}\hskip 5mm
		\put(-7,17){$x$} \put(-170,167){$y$}
		\put(-70,133){$\Psi_1$}
		\put(-50,138){$\Psi_2$}
		\put(-24,147){$\Psi_{3}$}
\vskip-1mm
	\caption {The graphs of $\Psi_1(x;1,1)$, $\Psi_2(x;1,1)$, $\Psi_{3}(x;1)$ on $\left[\frac14,\frac12\right]$.}
	\label{fig.Psi}
\end{figure}

\begin{proof}
(a).
The inequality $\beta_4>0$ follows from the exponential upper bound $f(m,n)<8^{mn}$.
By the arguments as in \cite[Lemma 3.2]{refOre2022}, it is easy to show that
the radius of convergence of each of $f(1,1,1)$, $g_k(1,1)$, $h_k(1)$, $j_1$ coincides with that of $J(x^4)$,
hence it is greater than $\beta_4^{1/4}$. Since these Laurent series have positive coefficients, the same
is true for the $x$-radii of convergence of $f$, $g_k$, $h_k$, $j_1$.

\smallskip
(b). We have $R_x(\Phi_k)\ge R_x(1/q)\ge 1/2$ by Lemma~\ref{lem.pos.coef4}.
Analyzing the behavior of the integrals in Lemma~\ref{lem.2roots.4}(c) when $x\to 1/2$ one can show
that $\Phi_k(1,1)\sim C_k(1-2x)^{-1/2}$, $k=1,2$, and
$\Phi_{3}(1)\sim C_3\log(1-2x)$ for some constants $C_k$. We omit the details since in fact we need only the
lower bounds for the radii of convergence.

The functions $p\Psi_1$, $r\Psi_2$, $\Psi_3$ with $t=s=u=1$
decrease on the interval $[0,1/2)$ by Lemma~\ref{lem.pos.coef4}
and they can be computed with any precision using Lemma~\ref{lem.2roots.4}(c).
Then it is easy to find numerically the zero of each of them on this interval;
see also Figure~\ref{fig.Psi}.
\end{proof}


\subsection{ The system of integral equations and its discretization }
\label{sect.fred4} 
Recall that $\beta_4$ is the first real root of the equation $J(x)=1$ (see \S\ref{sect.trapez4}).
The value of $c_4$ announced in the introduction corresponds
to $\beta_4= 0.054114\dots$ but our aim is to justify this computation and we do not assume yet that $\beta_4$ is
close to this value.
Let us set $\beta_4^+=0.05414$ (see Figure~\ref{fig.J}),
$x_0=\beta_4^{1/4}$, and $x_0^+=(\beta_4^+)^{1/4}\approx 0.482369$.

In the previous subsection we have shown that the series involved in the equations
\eqref{t4.g1}--\eqref{t4.h3} converge in a neighborhood of $\{x\}\times\T^3$
whenever $x<\min(x_0,x_0^+)$. Hence, for such $x$, we
may replace $\coef_{\xi^{-1}}(\dots)$ by $\frac{1}{2\pi i}\int_\T(\dots)d\xi$
(here $\xi$ stands for $t$, $s$, or $u$). Then for each $x$ we obtain a system of three
integral equations for the restrictions of $g_1$, $g_2$, and $h_3$ to $\T^2$ and $\T$.

We are going to prove that this system has a unique solution for any fixed $x\in[0,x_0^+]$
and the solution analytically depends on $x$. Then, by the identity theorem for analytic functions,
the series $g_1$, $g_2$, $h_3$ converge on this interval and,
replacing the integrals by Riemann sums, we can compute $g_1$ at any point by solving
numerically the resulting system of linear equations. This would allow us to find $J(x^4)$
(from \eqref{t4.j1} and \eqref{eq.J}) for any $x\in[0,x_0^+]$, and (since $J$ is monotone)
to solve the equation $J(x)=1$.

\begin{figure}
	\centering
		\includegraphics[width=66mm]{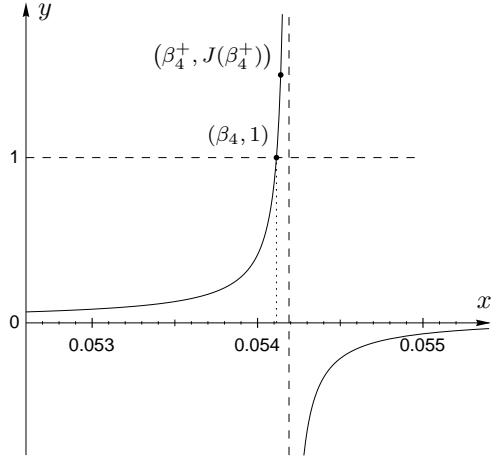}
		\put(-113,120){\footnotesize${{(\beta_4,1)}}$}
		\put(-134,150){\footnotesize${\big(\beta_4^+,J(\beta_4^+)\big)}$}
		\put(-11,57){$x$}
		\put(-177,168){$y$}
\vskip-2mm
	\caption {The graph of $J(x)$.}
	\label{fig.J}
\end{figure}

In the space of operators acting on analytic functions in a neighborhood of $\T^2$,
let us define the following subspaces $\Fr$, $\Fr_1$, $\Fr_2$:
\begin{equation*}
\begin{split}
    T  \in\Fr   \quad&\Leftrightarrow\quad (    T  g)(t,s) = \int_{\T^2}  K(t,s;u,v)\,g(u,v)\,du\,dv, \\
\wh T  \in\Fr_1 \quad&\Leftrightarrow\quad (\wh T  g)(t,s) = \int_{\T}\wh K(t,s;u)\,g(u,s)\,du, \\
\wh T  \in\Fr_2 \quad&\Leftrightarrow\quad (\wh T  g)(t,s) = \int_{\T}\wh K(t,s;v)\,g(t,v)\,dv
\end{split}
\end{equation*}
for some analytic functions $K$ and $\wh K$.
We also set
$$
   \Fr'_j=\{\sigma\circ\wh T\mid\wh T\in\Fr_j\}\qquad(j=1,2),\qquad \text{where $(\sigma g)(t,s) = g(s,t)$.}
$$
By Fredholm's theory, each operator $T\in\Fr$ is compact and, if $I-T$ is invertible, then
$(I-T)^{-1}=I+S$ with $S\in\Fr$.

If $\wh T\in \Fr_1$, then $\wh T$ is no longer compact. However, $\wh T$
can be considered as an analytic family of classical Fredholm operators $\{\wh T_s\}_{s\in\T}$ acting on functions of $t$.
Thus, if $I-\wh T_s$ is invertible for each $s\in\T$, then $(I-\wh T_s)^{-1}$ is an operator of the same form
which analytically depends on $s$ (see, e.g., \cite[Ch.~VI]{refRS}, \cite[\S5.4]{refOre2022}) and hence
$I-\wh T$ is invertible and
\begin{equation}                                 \label{eq.T.inv}
      (I-\wh T)^{-1} = I+\wh S, \qquad \wh S\in\Fr_1.
\end{equation}
It is clear that $\Fr$, $\Fr_j$ are closed by composition and,
for $T\in\Fr$, $\wh T_j\in\Fr_j$, we have
\begin{equation}        \label{eq.TT}
   T\wh T_j,\;\wh T_j T,\;\wh T_j\wh T_{3-j}\in\Fr, \qquad \sigma T_j\sigma \in \Fr_{3-j}.
\end{equation}

Eliminating $h_3$ from \eqref{t4.g2} and \eqref{t4.h3}, we obtain a system
of two equations for indeterminate functions $g_1, g_2$ on $\T^2$. By \eqref{eq.TT} it is of the form
\begin{align}
     g_1 &= \wh T_{11}g_1 + (\wh T_{12} + T_{12})g_2 + \phi,                                \label{eq.fred.1} \\
     g_2 &= (\wh T_{21} + \wh T'_{21} + T_{21})g_1 + T_{22}\,g_2,                           \label{eq.fred.2}
\end{align}
where $\phi=1/(x^2p\Psi_1)$, $T_{ij}\in\Fr$, and
$\wh T_{11}, \wh T_{12}, \wh T_{21}, \wh T'_{21}$ belong to $\Fr_2,\Fr_1,\Fr_1,\Fr'_1$ respectively.
For the integration kernels we shall use the same notation as for the operators but with $T$ replaced by $K$.

One can check numerically that $\|K_{22}\|_2<0.06$ for $x<x_0^+$,
hence the operator $I-T_{22}$ is invertible and the inverse is $I+\wt T_{22}$ with $\wt T_{22}\in\Fr$.
The integration kernel $\wt K_{22}$ can be computed at any point with any precision.
Resolving the equation \eqref{eq.fred.2} with respect to $g_2$
and plugging the result into \eqref{eq.fred.1} we obtain an equation of the form
\begin{equation}                                       \label{eq.fred}
             (I - \wh T_1 - \wh T_2 - T) g_1 = \phi,
\end{equation}
where $\wh T_1=\wh T_{12}\wh T_{21}\in\Fr_1$, $\wh T_2 = \wh T_{11}\in\Fr_2$, and $T\in\Fr$ by \eqref{eq.TT}.
A computation shows that
$\|\wh K_1|_{\T\times\{t\}}\|_2\le 0.83$ and
$\|\wh K_2|_{\{t\}\times\T}\|_2\le 0.53$ for any $x\in[0,x_0^+]$ and any fixed $t\in\T$.
Therefore (see \eqref{eq.T.inv}) the operators $\wh T_j$ are invertible and by \eqref{eq.TT} we have
$$
    (I-\wh T_1)^{-1}(I-\wh T_2)^{-1}(I-\wh T_1 - \wh T_2 - T) = I + \wt T,\qquad \wt T\in\Fr.
$$
A computation shows that $\|\wt T^{64}\|_2\le 0.7$ (we successively computed $\wt T^2$, $\wt T^4$, $\wt T^8,\dots\,$),
hence the operator in \eqref{eq.fred} is invertible and the solution of \eqref{eq.fred} (and hence of the initial system)
analytically depends on $x$. Thus it coincides with $g_1$ for each $x\in[0,x_0^+]$.

If we replace everywhere the integrals by the $n$-th Riemann sums, then the integral equation at each step transforms
to a system of usual linear equations for the values of the involved functions at points all whose coordinates
are $n$-th roots of unity (see \cite[\S5.2]{refOre2022} for estimates of the approximation error).
The discretization of \eqref{eq.fred} is
equivalent to the discretization of the system \eqref{t4.g1}--\eqref{t4.h3}.
In our computation of $c_4$ we used the system \eqref{t4.g1}--\eqref{t4.h3} because the matrix
of its discretization is sparse (it has only $O(n^3)$ non-zero entries) but the sparseness is lost after
the elimination of $h_3$ (see \S\ref{sect.soft} for the programming details).


\section{ Strips of width 5 }\label{sect.5}

In this section we express $c_5$ via solution of a certain system of integral equations
for the generating functions, which allows us to compute $c_5$ with
15 decimal digits. The computation and proofs are similar to those in \S\ref{sect.4}, therefore we
expose them with less details.


\subsection{ Reduction to trapezoids }\label{sect.trapez5}

Let $\lambda,\mu\in\{1,2\}$ and let $a_0,a_5$ be positive integers such that
$a_0-\lambda\equiv a_5\pm\mu\mod 5$. We set $\ell^*_{\lambda\mu}(a_0,a_5)$ to be the number
of primitive triangulations of the trapezoid $T_{5,\lambda}(a_0,a_5)$ spanned by
$(0,0), (\lambda,5), (\lambda+a_5,5), (a_0,0)$.
Let $\ell_{\lambda\mu}(a_0,a_5)$ be the number of its primitive triangulations
without interior side-to-side edges (i.e. edges of the form $[(k_0,0),(k_5,5)]$).
We set $\ell^*_{\lambda\mu}(0,0)=1$, $\ell_{\lambda\mu}(0,0)=0$,
and $\ell^*_{\lambda\mu}(a_0,a_5)=\ell_{\lambda\mu}(a_0,a_5)=0$ when $a_0-\lambda\not\equiv a_5\pm\mu\mod 5$.
Consider the generating functions (see Figure~\ref{figTrapez})
\begin{align*}
  L  _{\lambda\mu}(x)&=\sum_{n\ge 0} \ell  _{\lambda\mu}(n)x^n = \sum_{a_0,a_5\ge 0}\ell  _{\lambda\mu}(a_0,a_5)x^{a_0+a_5},\\
  L^*_{\lambda\mu}(x)&=\sum_{n\ge 0} \ell^*_{\lambda\mu}(n)x^n = \sum_{a_0,a_5\ge 0}\ell^*_{\lambda\mu}(a_0,a_5)x^{a_0+a_5}.
\end{align*}
\if01{ 
$$
  L  _{\lambda\mu}(x) = \sum_{a_0,a_5\ge 0}\ell  _{\lambda\mu}(a_0,a_5)x^{a_0+a_5},\qquad
  L^*_{\lambda\mu}(x) = \sum_{a_0,a_5\ge 0}\ell^*_{\lambda\mu}(a_0,a_5)x^{a_0+a_5}.
$$
}\fi 
\begin{figure} 
	\centering\includegraphics[width=115mm]{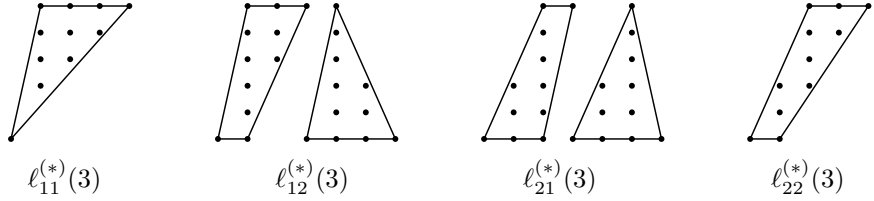} 
	\vskip 5pt
	\centering{$\ell_{11}^{(*)}(3)$} \hskip 22mm
	\centering{$\ell_{12}^{(*)}(3)$} \hskip 22mm
	\centering{$\ell_{21}^{(*)}(3)$} \hskip 22mm
	\centering{$\ell_{22}^{(*)}(3)$} \hskip 10mm
	\caption{Trapezoids that contribute to $\ell^*_{\lambda\mu}(3)$ and $\ell_{\lambda\mu}(3)$.}
	\label{figTrapez}
\end{figure}
Then the matrices
$$
   {\bL}(x)  =\left(\begin{matrix} L_  {11}(x) & L  _{12}(x) \\ L  _{21}(x) & L  _{22}(x) \end{matrix}\right), \qquad
   {\bL}^*(x)=\left(\begin{matrix} L^*_{11}(x) & L^*_{12}(x) \\ L^*_{21}(x) & L^*_{22}(x) \end{matrix}\right)
$$
are symmetric and satisfy the relation (here $\bI$ is the identity matrix)
$$
   \bL^* = \bI + \bL + \bL^2 + \bL^3 + \dots = (\bI - \bL)^{-1}.
$$
We have
$$
   \quad\;\;\bL=\left(\begin{matrix}
     \phantom{x+{}} 2x^2 + 79x^3 + 1075x^4  + \dots &\quad  x + 5x^2 +\phantom{1}84x^3 + 2104x^4 + \dots \\
              x +   5x^2 + 84x^3 + 2104x^4  + \dots &\quad  x + 8x^2 +          111x^3 + 3419x^4 + \dots
   \end{matrix}\right),
$$
$$
   \bL^*=\bI+\left(\begin{matrix}
      \phantom{x+{}} 3x^2 +\phantom{1}90x^3 + 1296x^4 + \dots  &\quad  x +\phantom{1}6x^2 + 101x^3 + 2469x^4 + \dots \\
               x +   6x^2 +          101x^3 + 2469x^4 + \dots  &\quad  x +          10x^2 + 140x^3 + 3965x^4 + \dots
   \end{matrix}\right).
$$
As in \cite{refOre2022} and in \S\ref{sect.4}, for each $\lambda,\mu$ we have
$\lim_n f(5,n)^{1/n} = \lim_n \ell^*_{\lambda\mu}(2n)^{1/n}$, and we deduce
\begin{equation}\label{eq.trapez5}
                                   c_5 = -\frac25\log_2\beta_5,
\end{equation}
where $\beta_5$ is the first real root of the equation $\det(\bI-\bL(x))=0$.

\begin{remark}
The analog of \eqref{eq.trapez4} and \eqref{eq.trapez5} for rectangles of an arbitrary width $m\ge 3$
is $c_m=-\frac2m\log_2\beta_m$ where $\beta_m$ is the first real root of the equation $\det(\bI-\bL(x))=0$
and $\bL(x)$ is a square matrix of size $\varphi(m)/2$ defined in a similar way ($\varphi$ is the Euler totient function).
\end{remark}


\begin{figure}
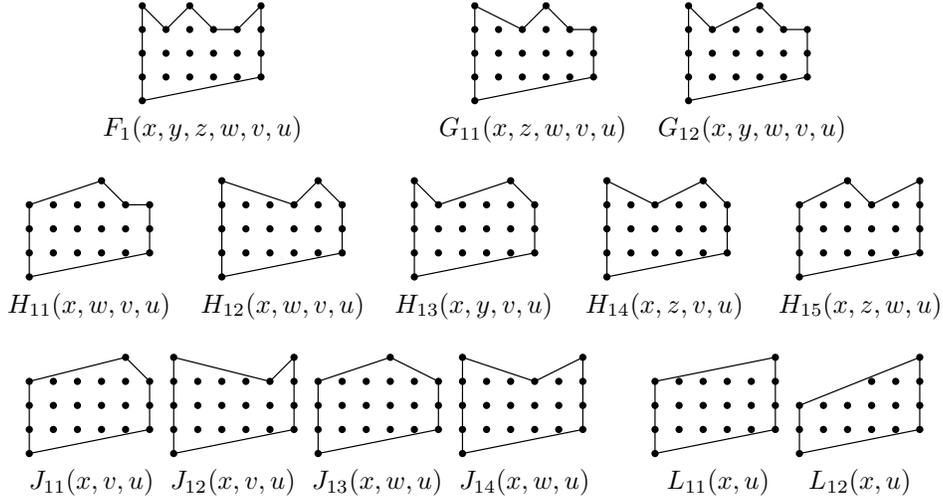
 
	\centering
		\includegraphics[width=90mm]{fg5.eps} 
\centerline{
 $F_1(x,y,z,w,v,u)$  \hskip 17mm
  $G_{11}(x,z,w,v,u)$ \hskip 3mm
  $G_{12}(x,y,w,v,u)$ 
}
\vskip3pt
	\centering
		\includegraphics[width=120mm]{h5.eps} 
\centerline{
  $H_{11}(x,w,v,u)$ \hskip 8pt
  $H_{12}(x,w,v,u)$ \hskip 8pt
  $H_{13}(x,y,v,u)$ \hskip 9pt
  $H_{14}(x,z,v,u)$ \hskip 10pt
  $H_{15}(x,z,w,u)$
}
\vskip3pt
	\centering
		\includegraphics[width=120mm]{jl5.eps} 
\centerline{%
  $J_{11}(x,v,u)$ \hskip 1mm
  $J_{12}(x,v,u)$ \hskip 1mm
  $J_{13}(x,w,u)$ \hskip 1mm
  $J_{14}(x,w,u)$ \hskip 9mm
  $L_{11}(x,u)$ \hskip 4mm
  $L_{12}(x,u)$
}
	\caption{The shapes of $F_1, G_{1\nu}, H_{1\nu}, J_{1\nu}, L_{1\nu}$.}
	\label{figFGHJL}
\vskip-2mm
\end{figure}

\begin{figure}
	\centering
		\includegraphics[width=100mm]{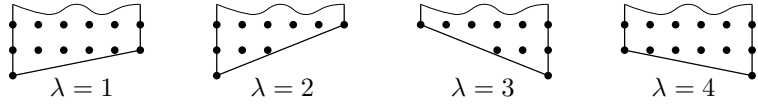}
\vskip-2mm
\centerline{
 $\lambda=1$  \hskip 17mm
 $\lambda=2$  \hskip 17mm
 $\lambda=3$  \hskip 17mm
 $\lambda=4$
}
	\caption{The bottom of the shapes of $F_\lambda, G_{\lambda\nu}, H_{\lambda\nu}, J_{\lambda\nu}, L_{\lambda\nu}$.}
	\label{fig.bottom}
\end{figure}

\subsection{ Recurrence relations }\label{sect.rec5}
We define the generating functions $F_1, G_{11}, G_{12},\dots$  as in \S\ref{sect.rec4}
but according to Figure~\ref{figFGHJL}.
By replacing the bottom parts of the corresponding shapes as in Figure~\ref{fig.bottom} we define
$F_\lambda, G_{\lambda1}, G_{\lambda2},\dots$ (with the same arguments) for $\lambda=1,2,3,4$.
The symmetrizations $\wt F_\lambda, \wt G_{\lambda1}, \wt G_{\lambda2},\dots$ ($\lambda=1,2$)
can be equivalently defined by setting
$$
   \wt F_\lambda = F_\lambda + F_{5-\lambda},\quad
   \wt X_{\lambda,\nu} = X_{\lambda,\nu} + X_{5-\lambda,\nu},
   \qquad X=G,H,J,L,\quad \lambda = 1,2,
$$
with the same arguments. Notice that
\begin{equation}     \label{eq.LL}
    L_{\lambda\mu}(x) = x^{2-\lambda}\wt L_{\lambda\mu}(x,x)
\end{equation}
(see \S\ref{sect.trapez5} for the definition of $L_{\lambda\mu}(x)$).

Since most of the computations do not depend on $\lambda$, we
omit the first subscript. In a few cases when they do depend on $\lambda$, we use
the Kronecker symbol $\delta_{\lambda\mu}$.
In this notation, the recurrence relations from \cite[\S2.1]{refOre2022}
(cf.~\S\ref{sect.rec4}) take the following form:

\begin{align*}
\wt F(x,\dots,u)Q(x,\dots,u)
     & = y^{1/2}\wt G_1\big(xy^{1/2},y^{1/2}z,w,v,u\big)P_1(w,v,u)\\ 
     & + z^{1/2}\wt G_2\big(x,\,yz^{1/2},z^{1/2}w,\,v,u\big)P_2(x,v,u)\\ 
     & + w^{1/2}\wt G_2\big(u,vw^{1/2},w^{1/2}z,y,x\big)P_2(u,y,x)\\ 
     & + v^{1/2}\wt G_1\big(uv^{1/2},v^{1/2}w,\,z,\,y,\,x\big)P_1(z,y,x)\\ 
     & - y^{1/2}w^{1/2}\wt H_4\big(xy^{1/2},y^{1/2}zw^{1/2},w^{1/2}v,u)(1-u)\\
     & - y^{1/2}v^{1/2}\wt H_5\big(xy^{1/2},y^{1/2}z,wv^{1/2},v^{1/2}u)\\
     & - z^{1/2}v^{1/2}\wt H_4\big(uv^{1/2},v^{1/2}wz^{1/2},z^{1/2}y,x)(1-x),
\end{align*}
where
\begin{align*}
  Q(x,y,z,w,v,u)&=1 - x - y - z - w - v - u\\
                &\quad + xz + xw + xv + xu + yw + yv + yu + zv + zu + wu\\
                &\quad - xzv - xzu - xwu - ywu,
\end{align*}
$$
  P_1(w,v,u) = 1 - w - v - u + wu,\qquad
  P_2(x,v,u) = (1-x)(1-v-u),
$$
\begin{align*}
\wt G_1(x,z,w,v,u)P_1(w,v,u)  
     &      = \coef_{\xi^{-1}}\wt F\Big(\frac x\xi,\xi^2,\frac z\xi,w,v,u\Big)P_1(w,v,u) \\ 
     &\quad + z^{1/3}\wt H_1\big(xz^{1/3},z^{2/3}w,v,u\big)(1-u-v)\\
     &\quad + w^{1/2}\wt H_4\big(x,zw^{1/2},w^{1/2}v,u\big)(1-u)\\
     &\quad + v^{1/2}\wt H_5\big(x,z,wv^{1/2},v^{1/2}u\big)\\
     &\quad - w^{1/2}\coef_{\xi^{-1}}\wt G_2\Big(u,vw^{1/2},w^{1/2}\frac z\xi,\xi^2,\frac x\xi\Big)(1-u)\\
     &\quad - v^{1/2}\coef_{\xi^{-1}}\wt G_1\Big(uv^{1/2},v^{1/2}w,\frac z\xi,\xi^2,\frac x\xi\Big)\\
     &\quad - z^{1/3}v^{1/2}\wt J_3\big(xz^{1/3},z^{2/3}wv^{1/2},v^{1/2}u\big),
\end{align*}
\begin{align*}
\wt G_2(x,y,w,v,u)P_2(x,v,u)
     &      = \coef_{\xi^{-1}}\wt F\Big(x,\frac y\xi,\xi^2,\frac w\xi,v,u\Big)P_2(x,v,u)\\
     &\quad + y^{1/3}\wt H_2\big(xy^{2/3},y^{1/3}w,v,u\big)(1-v-u)\\
     &\quad + w^{1/3}\wt H_3\big(x,yw^{1/3},w^{2/3}v,u\big)(1-x)(1-u)\\
     &\quad + v^{1/2}\wt H_4\big(uv^{1/2},v^{1/2}w,y,x\big)(1-x)\\
     &\quad - y^{1/3} v^{1/2}\wt J_4\big(xy^{2/3},y^{1/3}wv^{1/2},v^{1/2}u\big)\\
     &\quad - v^{1/2}\coef_{\xi^{-1}}\wt G_1\Big(uv^{1/2},v^{1/2}\frac w\xi,\xi^2,\frac y\xi,x\Big)(1-x),
\end{align*}

\begin{align*}
\wt H_1(x,w,v,u)(1-v-u)
     &      = \coef_{\xi^{-1}}\wt G_2\Big(\frac x{\xi^2},\xi^3,\frac w\xi,v,u\Big)(1-v-u)\\
     &\quad + w^{1/4}\wt J_1\big(xw^{1/4},w^{3/4}v,u\big)(1-u)\\
     &\quad + v^{1/2}\wt J_3\big(x,wv^{1/2},v^{1/2}u\big)\\
     &\quad - v^{1/2}\coef_{\xi^{-1}}\wt H_4\Big(uv^{1/2},v^{1/2}\frac w\xi,\xi^3,\frac x{\xi^2}\Big),
\end{align*}
\begin{align*}
\wt H_2(x,w,v,u)(1-v-u)
     &      = \coef_{\xi^{-1}}\wt G_1\Big(\frac x\xi,\xi^3,\frac w{\xi^2},v,u\Big)(1-v-u)\\
     &\quad + v^{1/2}\wt J_4\big(x,wv^{1/2},v^{1/2}u\big)\\
     &\quad - v^{1/2}\coef_{\xi^{-1}}\wt H_5\Big(\frac x\xi,\xi^3,\frac w{\xi^2}v^{1/2},v^{1/2}u\Big)	,
\end{align*}
\begin{align*}
\wt H_3(x,y,v,u)
     &      = \coef_{\xi^{-1}}\wt G_2\Big(u,\frac v\xi,\xi^3,\frac y{\xi^2},x\Big)
            + \frac{v^{1/4}}{1-u}\wt J_2\big(uv^{3/4},v^{1/4}y,x\big),
\end{align*}
\begin{align*}
\wt H_4(x,z,v,u)
     &      = \coef_{\xi^{-1}}\wt G_1\Big(x,\frac z\xi,\xi^2,\frac v\xi,u\Big)
       \; +\; \coef_{\xi^{-1}}\wt G_2\Big(u,v,\frac z\xi,\xi^2,\frac x\xi\Big)\\
     &\quad + \frac{v^{1/3}}{1-u}\left\{\wt J_4\big(uv^{2/3},v^{1/3}z,x\big)
            \; -\;   \coef_{\xi^{-1}}\wt H_2\Big(uv^{2/3},v^{1/3}\frac z\xi,\xi^2,\frac x\xi\Big)\right\}\\
     &\quad - \coef_{\xi_1^{-1}\xi_2^{-1}}\wt F\Big(\frac x{\xi_1},\xi_1^2,\frac z{\xi_1\xi_2},\xi_2^2,\frac v{\xi_2},u\Big),
\end{align*}
\begin{align*}
\wt H_5(x,z,w,u)
           = \coef_{\xi^{-1}}\wt G_1\Big(x,z,\frac w\xi,\xi^2,\frac u\xi\Big)
       \, &+\, \coef_{\xi^{-1}}\wt G_1\Big(u,w,\frac z\xi,\xi^2,\frac x\xi\Big)\\
      +  z^{1/3}\wt J_3\big(xz^{1/3},z^{2/3}w,u\big)
  \, &+\,w^{1/3}\wt J_3\big(uw^{1/3},w^{2/3}z,x\big)\\
      -  z^{1/3}\coef_{\xi^{-1}}\wt H_1\Big(xz^{1/3},z^{2/3}\frac w\xi,\xi^2,\frac u\xi\Big)
  \, &-\,w^{1/3}\coef_{\xi^{-1}}\wt H_1\Big(uw^{1/3},w^{2/3}\frac z\xi,\xi^2,\frac x\xi\Big)\\
      &-\, \coef_{\xi_1^{-1}\xi_2^{-1}}\wt F\Big(\frac x{\xi_1},\xi_1^2,\frac z{\xi_1},\frac w{\xi_2},\xi_2^2,\frac u{\xi_2}\Big),
\end{align*}
\begin{align*}
\wt J_1(x,v,u) &= \coef_{\xi^{-1}}\wt H_3\Big(\frac x{\xi^3},\xi^4,\frac v\xi,u\Big) + \frac{\delta_{\lambda,1}}{x(1-u)},\\
\wt J_2(x,v,u) &= \coef_{\xi^{-1}}\wt H_2\Big(\frac x\xi,\xi^4,\frac v{\xi^3},u\Big),
\end{align*}
\begin{align*}
\wt J_3(x,w,u) = \coef_{\xi^{-1}}\wt H_1\Big(x,\frac w\xi,\xi^2,\frac u\xi\Big)
               & + \coef_{\xi^{-1}}\wt H_4\Big(u,\frac w\xi,\xi^3,\frac x{\xi^2}\Big)\\
               &\qquad - \coef_{\xi_1^{-1}\xi_2^{-1}}\wt G_2\Big(\frac x{\xi_1^2},\xi_1^3,\frac w{\xi_1\xi_2},\xi_2^2,\frac u{\xi_2}\Big),\\
\wt J_4(x,w,u)  = \coef_{\xi^{-1}}\wt H_2\Big(x,\frac w\xi,\xi^2,\frac u\xi\Big)
              & + \coef_{\xi^{-1}}\wt H_5\Big(u,\frac w{\xi^2},\xi^3,\frac x\xi\Big)\\
              &\qquad - \coef_{\xi_1^{-1}\xi_2^{-1}}\wt G_1\Big(\frac x{\xi_1},\xi_1^3,\frac w{\xi_1^2\xi_2},\xi_2^2,\frac u{\xi_2}\Big)
                + \frac{x\delta_{\lambda,2}}u,
\end{align*}
$$
\wt L_1(x,u) = \coef_{\xi^{-1}}\wt J_2\Big(\frac u\xi,\xi^5,\frac x{\xi^4}\Big),
\qquad\qquad
\wt L_2(x,u) = \coef_{\xi^{-1}}\wt J_3\Big(\frac x{\xi^2},\xi^5,\frac u{\xi^3}\Big).
$$


\subsection{ A change of variables }\label{sect.ch.var5}
We set
\begin{align*}
  f(t,s,u,v) &= \wt F\Big(\frac{x}{t},\frac{x^2t^2}{s},\frac{x^2s^2}{tu},\frac{x^2u^2}{sv},\frac{x^2v^2}{u},\frac{x}{v}\Big),\\
  g_1(s,u,v) &= \frac{1}{s^{1/2}}\,
    \wt G_1\Big(\frac{x^2}{s^{1/2}},\frac{x^3 s^{3/2}}{u},\frac{x^2 u^2}{sv},\frac{x^2v^2}{u},\frac x v\Big),\\
  g_2(t,u,v) &= \frac{1}{(tu)^{1/2}}\,
    \wt G_2\Big(\frac{x}t,\frac{x^3t^{3/2}}{u^{1/2}},\frac{x^3u^{3/2}}{t^{1/2}v},\frac{x^2v^2}u,\frac x v\Big),\\
  h_k(u,v) &= \frac{1}{u^{k/3}}\wt H_k\Big(\frac{x^3}{u^{1/3}}, \frac{x^4 u^{4/3}}v, \frac{x^2v^2}{u},\frac x v\Big),\quad\qquad k=1,2,\\
  h_3(t,v) &= \frac{1}{t^{2/3}v^{1/3}}\wt H_3\Big(\frac{x}t, \frac{x^4t^{4/3}}{v^{1/3}}, \frac{x^4v^{4/3}}{t^{1/3}}, \frac{x}v\Big),\\
  h_4(s,v) &= \frac{1}{sv^{1/2}}\wt H_4\Big(\frac{x^2}{s^{1/2}}, \frac{x^4 s}{v^{1/2}}, \frac{x^3v^{3/2}}{s^{1/2}}, \frac x v\Big),\\
  h_5(s,u) &= \frac{1}{(su)^{1/2}}\wt H_5\Big(\frac{x^2}{s^{1/2}}, \frac{x^3 s^{3/2}}u, \frac{x^3u^{3/2}}s, \frac{x^2}{u^{1/2}}\Big),\\
  j_1(v)&=\frac 1{v^{1/4}}\wt J_1\Big(\frac{x^4}{v^{1/4}},x^5 v^{5/4},\frac x v\Big),\qquad
  j_3(u) =\frac 1{v^{5/6}}\wt J_3\Big(\frac{x^3}{v^{1/3}},x^5 u^{5/6},\frac{x^2}{u^{1/2}}\Big),\\
  j_2(v)&=\frac 1{v^{3/4}}\wt J_2\Big(\frac{x^4}{v^{1/4}},x^5 v^{5/4},\frac x v\Big),\qquad
  j_4(u) =\frac 1{v^{1/6}}\wt J_4\Big(\frac{x^3}{v^{1/3}},x^5 u^{5/6},\frac{x^2}{u^{1/2}}\Big),\\
  l_1(x)&=\wt L_1\big(x^5,x^5\big),\hskip84pt l_2(x)=\wt L_2\big(x^5,x^5\big).
\end{align*}
We also set
\begin{align*}
  q(t,s,u,v) &= Q\Big(\frac{x}{t},\frac{x^2t^2}{s},\frac{x^2s^2}{tu},\frac{x^2u^2}{sv},\frac{x^2v^2}{u},\frac{x}{v}\Big),\\
  p_1(s,u,v) &= P_1\Big(\frac{x^2u^2}{sv},\frac{x^2v^2}{u},\frac{x}{v}\Big),\qquad
  p_2(t,u,v)  = P_2\Big(\frac{x}{t},\frac{x^2v^2}{u},\frac{x}{v}\Big),\\
  r_1(u,v) &= 1 - \frac{x^2v^2}{u} - \frac{x}{v}, \qquad\qquad\quad\;\; r_2(t,v)=\Big(1-\frac{x}{t}\Big)\Big(1-\frac{x}{v}\Big).
\end{align*}
Then the recurrence relations from \S\ref{sect.rec5} take the following form (cf.~\S\ref{sect.ch.var4}):
\begin{equation}\label{t5.f}
\begin{split}
f(t,s,u,v)q(t,s,u,v)& = xt\,p_1(s,u,v) g_1(s,u,v) + xs\,p_2(t,u,v) g_2(t,u,v) \\
                    & + xv\,p_1(u,s,t)\,g_1(u,s,t) + xu\,p_2(v,s,t)\,g_2(v,s,t)\\
                    & - x^2tu\Big(1-\frac x v\Big) h_4(s,v)
                      - x^2sv\Big(1-\frac x t\Big) h_4(u,t)
                      - x^2tv\,h_5(s,u), 
\end{split}
\end{equation}
\begin{equation}\label{t5.g1}
\begin{split}
g_1(s,u,v)p_1(s,u,v)& = \frac x s\,p_1(s,u,v)\coef_{t^{-1}} f(t,s,u,v)\\
                    &\quad + x\,r_1(u,v)h_1(u,v) + xu\Big(1-\frac x v\Big)h_4(s,v) + xv\,h_5(s,u)\\
                    &\quad - \frac{x^2u}s\Big(1-\frac x v\Big)\coef_{t^{-1}} g_2(v,s,t)
                      - \frac{x^2v}s\coef_{t^{-1}} g_1(u,s,t)
                      - x^2v\, j_3(u),
\end{split}
\end{equation}
\begin{equation}\label{t5.g2}
\begin{split}
g_2(t,u,v)p_2(t,u,v) &= \frac{x}{tu}\,p_2(t,u,v)\coef_{s^{-1}} f(t,s,u,v)\\
                     &\quad + x\,r_1(u,v)h_2(u,v) + x\,r_2(t,v)h_3(t,v) + xv\Big(1-\frac{x}{t}\Big)h_4(u,t)\\
                     &\quad - \frac{x^2v}u\,j_4(u) - \frac{x^2v}{ut}\Big(1-\frac{x}{t}\Big)\coef_{s^{-1}}g_1(u,s,t),
\end{split}
\end{equation}
\begin{align}
\begin{split}\label{t5.h1}
h_1(u,v)r_1(u,v) &= x\,r_1(u,v)\coef_{t^{-1}} g_2(t,u,v) + x\Big(1-\frac x v\Big)j_1(v) + xv\,j_3(u) \\
                 &\qquad- x^2v\coef_{t^{-1}}h_4(u,t),
\end{split}\\
h_2(u,v)r_1(u,v) &= \frac x u\,r_1(u,v)\coef_{s^{-1}}g_1(s,u,v) + \frac{xv}u j_4(u) - \frac{x^2v}u\coef_{s^{-1}}h_5(s,u),\label{t5.h2}
\end{align}
\begin{align}
h_3(t,v) &= \frac{x}t\coef_{s^{-1}} g_2(v,s,t) + \frac{vx}{v-x}\,j_2(t),\label{t5.h3}\\
\begin{split}\label{t5.h4}
h_4(s,v) &= \frac x{sv}\coef_{u^{-1}} g_1(s,u,v) + \frac x s \coef_{t^{-1}} g_2(v,s,t) + \frac{xv}{s(v-x)}j_4(s)\\
         &\quad - \frac{x^2}{s^2v}\coef_{t^{-1}u^{-1}}f(t,s,u,v) - \frac{x^2v}{s(v-x)}\coef_{t^{-1}}h_2(s,t),
\end{split}\\
\begin{split}\label{t5.h5}
h_5(s,u) &= \frac{x}u\coef_{v^{-1}} g_1(s,u,v) + \frac{x}s\coef_{t^{-1}} g_1(u,s,t) + x\,j_3(u) + x\,j_3(s)\\
         &\quad
            - \frac{x^2}u\coef_{v^{-1}} h_1(u,v) - \frac{x^2}s\coef_{t^{-1}} h_1(s,t)
            - \frac{x^2}{su}\coef_{t^{-1}v^{-1}}f(t,s,u,v),
\end{split}
\end{align}
\begin{align}
j_1(v) &= x\coef_{t^{-1}}h_3(t,v) + \frac{v\delta_{\lambda,1}}{x^4(v-x)},\label{t5.j1}\\
j_2(v) &= \frac x v\coef_{u^{-1}}h_2(u,v),\label{t5.j2}\\
j_3(u) &= \frac x u\coef_{v^{-1}} h_1(u,v) + x\coef_{t^{-1}}h_4(u,t) - \frac{x^2}u\coef_{t^{-1}v^{-1}}g_2(t,u,v),\label{t5.j3}\\
j_4(u) &=       x  \coef_{v^{-1}} h_2(u,v) + x\coef_{s^{-1}}h_5(u,s) - \frac{x^2}u\coef_{s^{-1}v^{-1}}g_1(s,u,v) + x\delta_{\lambda,2}\label{t5.j4},
\end{align}
\begin{equation}\label{t5.l12}
l_1(x) = x\coef_{v^{-1}}j_2(v),\qquad\qquad l_2(x) = x\coef_{u^{-1}}j_3(u).
\end{equation}


\subsection{ Elimination of $f$, $h_1$, $h_2$, $j_1$, $j_2$, $j_4$ }\label{sect.elim5}
Let
\begin{xalignat*}{2}
   &\Phi_1(s,u,v)  = \coef_{t^{-1}}\frac t{q(t,s,u,v)},        &&\Psi_1(s,u,v) = 1-\frac{x^2\!}s\,p_1(s,u,v)\Phi_1(s,u,v),\\
   &\Phi_2(t,u,v)  = \coef_{s^{-1}}\frac s{q(t,s,u,v)},        &&\Psi_2(t,u,v) = 1-\frac{x^2}{tu}\,p_2(t,u,v)\Phi_2(t,u,v),\\
   &\Phi_{13}(s,v) = \coef_{t^{-1}u^{-1}}\frac{tu}{q(t,s,u,v)},&&\Psi_{13}(s,v)= 1-\frac{x^4(v-x)}{s^2 v^2}\Phi_{13}(s,v),\\
   &\Phi_{14}(s,u) = \coef_{t^{-1}v^{-1}}\frac{tv}{q(t,s,u,v)},&&\Psi_{14}(s,u)= 1-\frac{x^4}{su}\Phi_{14}(s,u),\\
   &\Phi_0(u)      = \coef_{v^{-1}}\frac{v}{r_1(u,v)},         &&\Psi_0(u)     = 1-\frac{x^2}{u}\Phi_0(u).
\end{xalignat*}

\begin{remark}\label{rem.phi0} We have (see OEIS \cite{refOEIS}, A025174)
$$
     \frac{x^2}u\Phi_0(u) = \sum_{n=1}^\infty \binom{3n-1}{n-1}\frac{x^{4n}}{u^n}.
$$
\end{remark}

\begin{lemma}\label{lem.phi0} One has
\begin{equation}\label{eq.phi0}
      h_2(u,v) = \frac{x}u\coef_{s^{-1}}g_1(s,u,v) + \frac{x^2 v\delta_{\lambda,2}}{u\,r_1(u,v)\Psi_0(u)}.
\end{equation}
\end{lemma}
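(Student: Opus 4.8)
The plan is to derive \eqref{eq.phi0} from the two $\lambda$-dependent relations that already involve $h_2$, namely \eqref{t5.h2} and \eqref{t5.j4}, by eliminating the auxiliary function $j_4$. First I would take \eqref{t5.j4} and isolate the two terms that also occur in \eqref{t5.h2}: the relation \eqref{t5.j4} reads
\begin{equation*}
 j_4(u) = x\coef_{v^{-1}}h_2(u,v) + x\coef_{s^{-1}}h_5(u,s) - \frac{x^2}u\coef_{s^{-1}v^{-1}}g_1(s,u,v) + x\delta_{\lambda,2},
\end{equation*}
while \eqref{t5.h2} reads
\begin{equation*}
 h_2(u,v)r_1(u,v) = \frac x u\,r_1(u,v)\coef_{s^{-1}}g_1(s,u,v) + \frac{xv}u j_4(u) - \frac{x^2v}u\coef_{s^{-1}}h_5(s,u).
\end{equation*}
Substituting the expression for $j_4(u)$ into \eqref{t5.h2}, the terms $\frac{x^2v}{u}\coef_{s^{-1}}h_5$ cancel (using that $h_5(s,u)=h_5(u,s)$ is symmetric by construction, so $\coef_{s^{-1}}h_5(u,s)=\coef_{s^{-1}}h_5(s,u)$), and similarly the $g_1$-terms combine; one is left with
\begin{equation*}
 h_2(u,v)r_1(u,v) = \frac x u\,r_1(u,v)\coef_{s^{-1}}g_1(s,u,v) - \frac{x^3 v}{u}\coef_{s^{-1}v^{-1}}g_1(s,u,v) + \frac{x^2 v\delta_{\lambda,2}}{u}.
\end{equation*}

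Next I would observe that applying $\coef_{v^{-1}}$ to both sides expresses $\coef_{s^{-1}v^{-1}}g_1(s,u,v)$ in terms of $\coef_{v^{-1}}h_2(u,v)$; but it is cleaner to argue directly by the operator identity $h_2 \mapsto \coef_{v^{-1}} h_2$ followed by re-substitution. Concretely: let $\alpha(u)=\coef_{v^{-1}}h_2(u,v)$ and $\gamma(u)=\coef_{s^{-1}v^{-1}}g_1(s,u,v)$; taking $\coef_{v^{-1}}$ of the displayed equation and using $\coef_{v^{-1}}\big(h_2(u,v)r_1(u,v)\big)$, which by $r_1(u,v)=1-\frac{x^2v^2}{u}-\frac x v$ equals $\alpha(u) - \frac{x}{u}\cdot(\text{shift of }h_2) - \dots$ — this is where care is needed. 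The clean route instead is to divide the displayed equation by $r_1(u,v)$, obtaining
\begin{equation*}
 h_2(u,v) = \frac x u\coef_{s^{-1}}g_1(s,u,v) + \frac{1}{r_1(u,v)}\left(\frac{x^2 v\delta_{\lambda,2}}{u} - \frac{x^3 v}{u}\gamma(u)\right),
\end{equation*}
then apply $\coef_{v^{-1}}$ to solve for $\gamma(u)$ in terms of $\Phi_0$, and re-insert. Here $\coef_{v^{-1}}\frac{v}{r_1(u,v)}=\Phi_0(u)$ by definition, so $\coef_{v^{-1}}\frac{1}{r_1(u,v)}\big(\dots\big)$ collapses to $\big(\frac{x^2\delta_{\lambda,2}}{u}-\frac{x^3}{u}\gamma(u)\big)\Phi_0(u)$ plus the contribution of $\coef_{s^{-1}}g_1$, giving a linear equation for $\gamma(u)$ whose solution carries the factor $\big(1-\frac{x^2}u\Phi_0(u)\big)^{-1}=\Psi_0(u)^{-1}$. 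Substituting $\gamma(u)$ back produces exactly \eqref{eq.phi0}, with the $\delta_{\lambda,2}$ and $\Psi_0(u)$ in the stated positions.

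The main obstacle I anticipate is purely bookkeeping: correctly handling the two-term denominator $r_1$ under $\coef_{v^{-1}}$, since $1/r_1(u,v)$ is a genuine power series in $x$ (it does not simplify to a rational function of $v$), so one must use the identity $\coef_{v^{-1}}\big(v^{-1}\mathcal F(v)/r_1(u,v)\big)$ versus $\coef_{v^{-1}}\big(\mathcal F(v)/r_1(u,v)\big)$ attentively and track which terms survive the $\coef_{v^{-1}}$ projection. A consistency check, which I would run via the {\tt Mathematica} code referenced in the earlier remark, is to verify \eqref{eq.phi0} up to $O(x^n)$ for a modest $n$ in both cases $\lambda=1$ (where the correction term vanishes) and $\lambda=2$. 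The case $\lambda=1$ is immediate from \eqref{t5.h2}--\eqref{t5.j4} since then $\coef_{s^{-1}}h_5$ cancels and no source term remains, recovering $h_2(u,v)=\frac x u\coef_{s^{-1}}g_1(s,u,v)$ directly, so the real content is the $\lambda=2$ computation sketched above.
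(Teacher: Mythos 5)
Your overall strategy is the same as the paper's: eliminate $j_4$ between \eqref{t5.h2} and \eqref{t5.j4}, use the symmetry $h_5(s,u)=h_5(u,s)$ to cancel the $h_5$-terms, project with $\coef_{v^{-1}}$ to get a scalar equation involving $\Phi_0$, and re-insert. However, your displayed intermediate equation is wrong in a way that breaks the subsequent step. Substituting \eqref{t5.j4} into \eqref{t5.h2} produces, besides the terms you keep, the term $\frac{xv}{u}\cdot x\coef_{v^{-1}}h_2(u,v)=\frac{x^2v}{u}\,\alpha(u)$, which you drop; moreover the $\coef_{s^{-1}v^{-1}}g_1$ term acquires the factor $\frac{xv}{u}\cdot\frac{x^2}{u}=\frac{x^3v}{u^2}$, not $\frac{x^3v}{u}$ (with the wrong power of $u$ the factor $1-\frac{x^2}{u}\Phi_0(u)=\Psi_0(u)$ would not come out). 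The correct equation is
\begin{equation*}
 h_2(u,v)\,r_1(u,v) = \frac{x}{u}\,r_1(u,v)\coef_{s^{-1}}g_1(s,u,v)
   + \frac{x^2v}{u}\Bigl(\delta_{\lambda,2} + \alpha(u) - \frac{x}{u}\,\gamma(u)\Bigr).
\end{equation*}

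Because of the dropped term, your plan ``apply $\coef_{v^{-1}}$ to solve for $\gamma(u)$ and re-insert'' does not close: after projection you get one linear equation in the two unknowns $\alpha(u)$ and $\gamma(u)$, and $\gamma$ alone is not determined. The repair is exactly the paper's device: the unknowns enter only through the combination $C(u)=\alpha(u)-\frac{x}{u}\gamma(u)=\coef_{v^{-1}}\chi(u,v)$, where $\chi=h_2-\frac{x}{u}\coef_{s^{-1}}g_1$. The corrected equation reads $\chi(u,v)=\frac{x^2v}{u\,r_1(u,v)}\bigl(\delta_{\lambda,2}+C(u)\bigr)$; applying $\coef_{v^{-1}}$ gives the closed scalar equation $C=\frac{x^2}{u}\Phi_0(u)(\delta_{\lambda,2}+C)$, whence $\delta_{\lambda,2}+C=\delta_{\lambda,2}/\Psi_0(u)$, and re-insertion yields \eqref{eq.phi0}. (Your remark that the case $\lambda=1$ is ``immediate'' also needs this step: even then one must conclude $C=0$ from $C=\frac{x^2}{u}\Phi_0 C$ and the invertibility of $\Psi_0$.) With these corrections your argument coincides with the paper's proof.
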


\begin{proof}
Eliminating $j_4(u)$ from \eqref{t5.h2} and \eqref{t5.j4}, we obtain the equation
\begin{equation}\label{eq1.proof.phi0}
          \chi(u,v) = \frac{x^2v}{u\,r_1(u,v)}\,\big(\delta_{\lambda,2} + \coef_{w^{-1}}\chi(u,w) \big),
\end{equation}
where
$$
    \chi(u,v) = h_2(u,v) - \frac{x}u\coef_{s^{-1}}g_1(s,u,v).
$$
By equating the coefficients of $v^{-1}$ in both sides of \eqref{eq1.proof.phi0}, we obtain
\begin{equation}\label{eq2.proof.phi0}
        \coef_{v^{-1}}\chi(u,v) = \frac{x^2}u\Phi_0(u)\big(\delta_{\lambda,2} + \coef_{w^{-1}}\chi(u,w) \big).
\end{equation}
Noting that $\coef_{v^{-1}}\chi(u,v)=\coef_{w^{-1}}\chi(u,w)$, we find $\coef_{w^{-1}}\chi(u,w)$ from 
\eqref{eq2.proof.phi0} and plug it into \eqref{eq1.proof.phi0}, which yields \eqref{eq.phi0} after simplifications.
The lemma is proven.
\end{proof}

\begin{lemma}\label{lem.L2} One has
\begin{equation}\label{eq.L2}
      l_2(x)  =  x^2 \coef_{t^{-1}u^{-1}} h_4(u,t)  +  x^3\coef_{v^{-1}} j_1(v)
\end{equation}
\end{lemma}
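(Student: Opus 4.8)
The plan is to derive \eqref{eq.L2} by formal manipulation of the recurrences \eqref{t5.l12}, \eqref{t5.j3} and \eqref{t5.h1}, in the spirit of the proof of Lemma~\ref{lem.phi0}. The first step is to solve for $j_3$. Dividing \eqref{t5.h1} by $r_1(u,v)$ expresses $h_1(u,v)$ explicitly through $g_2(t,u,v)$, $j_1(v)$, $j_3(u)$ and $\coef_{t^{-1}}h_4(u,t)$. Substituting this into the term $\frac xu\coef_{v^{-1}}h_1(u,v)$ of \eqref{t5.j3}, the contribution of $x\,r_1(u,v)\coef_{t^{-1}}g_2(t,u,v)$ becomes $\frac{x^2}u\coef_{t^{-1}v^{-1}}g_2(t,u,v)$ and cancels the $g_2$-term already present in \eqref{t5.j3}; the contributions of $xv\,j_3(u)$ and $-x^2v\coef_{t^{-1}}h_4(u,t)$ become $\frac{x^2}u\Phi_0(u)\,j_3(u)$ and $-\frac{x^3}u\Phi_0(u)\coef_{t^{-1}}h_4(u,t)$ by the definition $\Phi_0(u)=\coef_{v^{-1}}\frac{v}{r_1(u,v)}$. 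Collecting terms and using $\Psi_0(u)=1-\frac{x^2}u\Phi_0(u)$ yields
\[
  j_3(u)\,\Psi_0(u) \;=\; \frac{x^2}u\,\coef_{v^{-1}}\frac{(1-x/v)\,j_1(v)}{r_1(u,v)} \;+\; x\,\Psi_0(u)\,\coef_{t^{-1}}h_4(u,t).
\]

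Since $\Psi_0(u)=1+O(x)$ is invertible in $\Z[u^{\pm1}]((x))$, I divide by it and apply $x\coef_{u^{-1}}$; by \eqref{t5.l12} this gives
\[
  l_2(x) \;=\; x^2\,\coef_{t^{-1}u^{-1}}h_4(u,t) \;+\; x^3\,\coef_{u^{-1}}\frac{1}{u\,\Psi_0(u)}\,\coef_{v^{-1}}\frac{(1-x/v)\,j_1(v)}{r_1(u,v)}.
\]
So it remains to show that the last term equals $x^3\coef_{v^{-1}}j_1(v)$; in fact I would prove the more general identity
\[
  \coef_{u^{-1}}\frac{1}{u\,\Psi_0(u)}\,\coef_{v^{-1}}\frac{(1-x/v)\,P(v)}{r_1(u,v)} \;=\; \coef_{v^{-1}}P(v)
\]
for every Laurent series $P\in\Z[v^{\pm1}]((x))$, and then take $P=j_1$.

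To prove this identity I would use the observation that $u$ occurs in $r_1(u,v)=1-\frac{x^2v^2}u-\frac xv$ only through the term $\frac{x^2v^2}u$. Expanding $1/r_1(u,v)$ as a geometric series then shows that it involves only non-positive powers of $u$ and that its $u$-free part equals $\sum_{n\ge0}(x/v)^n=1/(1-x/v)$. On the other hand, by Remark~\ref{rem.phi0} the series $1/\Psi_0(u)$ is a power series in $x^4/u$ with constant term $1$, so $\frac1{u\,\Psi_0(u)}$ involves only the powers $u^{-1},u^{-2},\dots$. Matching powers of $u$, $\coef_{u^{-1}}$ of the product therefore picks out precisely the $u$-free part of $\coef_{v^{-1}}\frac{(1-x/v)P(v)}{r_1(u,v)}$, and this $u$-free part equals $\coef_{v^{-1}}\bigl((1-x/v)P(v)\cdot\frac1{1-x/v}\bigr)=\coef_{v^{-1}}P(v)$. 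Taking $P=j_1$ (which lies in $\Z[v^{\pm1}]((x))$ by the change of variables of \S\ref{sect.ch.var5}) yields \eqref{eq.L2}.

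The algebraic cancellations in the first step are routine. The parts that I expect to require genuine care are the final identity --- whose proof rests on the single structural observation above about how $u$ enters $r_1$, together with Remark~\ref{rem.phi0} --- and the bookkeeping needed to guarantee that each series involved ($1/r_1$, $1/\Psi_0$, $1/(1-x/v)$, $j_1$) lies in the appropriate ring $\Z[\dots]((x))$, so that the iterated coefficient extractions and the divisions performed above are legitimate and commute in the way the argument uses.
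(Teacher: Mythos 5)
Your derivation is correct, but it is a genuinely different argument from the paper's. The paper proves Lemma~\ref{lem.L2} combinatorially in two sentences: every triangulation counted by $l_2$ contains a fixed triangle $ABC$, and there are exactly two possibilities for the triangle adjacent to $AB$ from below (with the second forcing an extra edge $CD$); the two cases produce the two terms of \eqref{eq.L2} directly. You instead show that \eqref{eq.L2} is a \emph{formal consequence} of the recurrences \eqref{t5.h1}, \eqref{t5.j3}, \eqref{t5.l12} already listed in \S\ref{sect.ch.var5}, by eliminating $h_1$ and solving for $j_3\Psi_0$, exactly in the style of Lemma~\ref{lem.phi0}. I checked the algebra: the $g_2$-terms do cancel, the identity $j_3\Psi_0=\frac{x^2}{u}\coef_{v^{-1}}\frac{(1-x/v)j_1(v)}{r_1(u,v)}+x\Psi_0\coef_{t^{-1}}h_4(u,t)$ is right, and your final coefficient-extraction identity holds because $1/r_1(u,v)$ and $1/\Psi_0(u)$ contain only non-positive powers of $u$ with $u$-free parts $1/(1-x/v)$ and $1$ respectively, so $\coef_{u^{-1}}$ of $\frac{1}{u\Psi_0(u)}(\cdots)$ picks out exactly the $u$-free part of the inner coefficient, which is $\coef_{v^{-1}}P(v)$; all divisions are by units of the form $1+O(x)$ in $\Z[u^{\pm1},v^{\pm1}]((x))$ and the coefficient extractions commute there. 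What each approach buys: the paper's proof is shorter and independent of the long list of recurrences (it injects a small piece of genuinely combinatorial information), while yours demonstrates that no new combinatorial input is needed at this point — \eqref{eq.L2} already follows mechanically from the system — which is a useful consistency check and easier to verify symbolically. Both are valid proofs of the lemma.
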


\begin{figure}
	\centering
		\includegraphics[width=80mm]{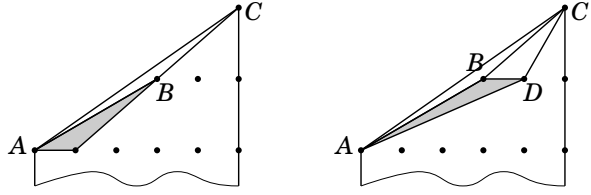}
\vskip-1mm
	\caption {To the proof of Lemma \ref{lem.L2}.}
	\label{fig.lem.L2}
\end{figure}

\begin{proof}
Any primitive triangulation contributing to $l_2(x)$ has the triangle $ABC$ shown in Figure~\ref{fig.lem.L2}.
There are exactly two possibilities for the triangle adjacent to the edge $AB$ from below:
the gray triangles in Figure~\ref{fig.lem.L2}. In the second case we necessarily have also the edge $CD$.
In terms of the generating functions these observations mean \eqref{eq.L2}. The lemma is proven.
\end{proof}

\begin{remark} A {\tt Wolfram Mathematica} code
that checks the identities \eqref{t5.g1-1} -- \eqref{t5.l12-1} below up to $O(x^n)$ is available at
  \url{https://www.math.univ-toulouse.fr/~orevkov/tr45.html}.
\end{remark}

By plugging \eqref{t5.f}, \eqref{t5.h1}, and then \eqref{t5.j1}, into \eqref{t5.g1} we obtain
\begin{equation}\label{t5.g1-1}
\begin{split}
     \Psi_1&(s,u,v) g_1(s,u,v) =
     \frac{\delta_{\lambda,1}}{x^2 p_1(s,u,v)}
     \;+\; \frac{x\Psi_1(s,u,v)}{v\,p_1(s,u,v)}\left\{ u(v-x) h_4(s,v)  \;+\;  v^2 h_5(s,u) \right\}
\\&\qquad
    + \coef_{t^{-1}}\!\left\{ \frac{x^2v}s \left( \frac{p_1(u,s,t)}{q(t,s,u,v)} - \frac{1}{p_1(s,u,v)} \right) g_1(u,s,t)
    \;+\; \frac{x^3(v-x)}{v\,p_1(s,u,v)} h_3(t,v)\right\}
\\&\qquad
    + \frac{x^2u(v-x)}{sv} \coef_{t^{-1}} \left( \frac{r_1(s,t)}{q(t,s,u,v)} - \frac 1{p_1(s,u,v)} \right) g_2(v,s,t)
\\&\qquad
    + \coef_{t^{-1}} \left( \frac{x^2(t-x)}{t\,q(t,s,u,v)} + \frac{x^2}{p_1(s,u,v)} \right)\!\big\{r_1(u,v) g_2(t,u,v)- xv\, h_4(u,t)\big\}.\\
\end{split}
\end{equation}

By plugging \eqref{t5.f} and \eqref{t5.h2} into \eqref{t5.g2} we obtain
\begin{equation}\label{t5.g2-1}
\begin{split}
     \Psi_2&(t,u,v) g_2(t,u,v) =
       \frac{x(v-x)}{v\,r_1(u,v)} h_3(t,v)
      \;+\; \frac {xv\Psi_2(t,u,v)}{r_1(u,v)} h_4(u,t) 
  \\&\quad
         + \coef_{s^{-1}}\left\{ \frac{x^2v}{tu} \left( \frac{p_1(u,s,t)}{q(t,s,u,v)} - \frac 1{r_1(u,v)} \right) g_1(u,s,t)
       \;+\; \frac{x^2p_2(v,s,t)}{t\,q(t,s,u,v)} g_2(v,s,t)\right\}
  \\&\quad
    + \frac{x^2}u \coef_{s^{-1}}\left( \frac{p_1(s,u,v)}{q(t,s,u,v)} + \frac t{t-x}   \right) g_1(s,u,v)
       \;-\; \frac{x^3(v-x)}v\coef_{s^{-1}}\frac{h_4(s,v)}{q(t,s,u,v)}
  \\&\quad
      - \frac{x^3v}u \coef_{s^{-1}} \left( \frac 1{q(t,s,u,v)} + \frac 1{p_2(t,u,v)} \right)h_5(s,u).
\end{split}
\end{equation}
By plugging \eqref{t5.f} and \eqref{t5.j4} (with $t,s,u,v$ replaced by $v,u,s,t$ respectively)
into \eqref{t5.h4} we obtain
\begin{equation}\label{t5.h4-1}
\begin{split}
\Psi_{13}(s,v)& h_4(s,v)
    = \frac{x^2v\,\delta_{\lambda,2}}{s(v-x)}
  \;+\; \frac{x}{sv}\coef_{u^{-1}}\!\Psi_1(s,u,v)g_1(s,u,v)
  \;+\; \frac{x}{s} \coef_{t^{-1}}\!\Psi_2(v,s,t)g_2(v,s,t)\\
  & - \frac{x^3}{s}\coef_{t^{-1}u^{-1}}\!\left\{\left( \frac v{s(v-x)} + \frac{p_1(u,s,t)}{s\,q(t,s,u,v)} \right) g_1(u,s,t)
                  \;+\; \frac{p_2(t,u,v)}{v\,q(t,s,u,v)} g_2(t,u,v)\right\}\\
  & + \coef_{u^{-1}}\! \left(\frac{x^2v}{s(v-x)} + \frac{x^4}{s^2}\Phi_1(s,u,v)\right) h_5(s,u)
    \;+\;  \coef_{t^{-1}u^{-1}}\! \frac{x^4(t-x)}{ts\,q(t,s,u,v)} h_4(u,t).
\end{split}
\end{equation}
By plugging \eqref{t5.f}, \eqref{t5.j3}, and \eqref{t5.j3} with $t,s,u,v$ replaced by $v,u,s,t$ 
into \eqref{t5.h5} we obtain
\begin{equation}\label{t5.h5-1}
\begin{split}
\Psi_{14}(s,u) h_5(s,u)
    =   \frac{x}{u}\coef_{v^{-1}}\Psi_1(s,u,v)g_1(s,u,v)
  &\;+\; \frac{x}{s}\coef_{t^{-1}}\Psi_1(u,s,t)g_1(u,s,t)\\
      + \coef_{v^{-1}}\!\Big(x^2 + \frac{x^4(v-x)}{sv}\Phi_1(s,u,v)\Big)h_4(s,v)
 &\;+\; \coef_{t^{-1}}\!\Big(x^2 + \frac{x^4(t-x)}{tu}\Phi_1(u,s,t)\Big)h_4(u,t)\\
  &\hskip-185pt        
    - \coef_{t^{-1}v^{-1}}\left\{ \frac{x^3}u\Big(1 + \frac{p_2(t,u,v)}{q(t,s,u,v)}\Big){g_2(t,u,v)}
                            \;+\; \frac{x^3}s\Big(1 + \frac{p_2(v,s,t)}{q(t,s,u,v)}\Big){g_2(v,s,t)}\right\}.
\end{split}
\end{equation}
Eliminating $h_2(u,v)$ from \eqref{t5.j2} and \eqref{eq.phi0} we obtain
\begin{equation}\label{t5.j2-1}
 j_2(v)=\coef_{s^{-1}u^{-1}}\frac{x^2}{uv}g_1(s,u,v)\;+\;\coef_{u^{-1}}\frac{x^3\delta_{\lambda,2}}{u\,r_1(u,v)\Psi_0(u)}.
\end{equation}

Then \eqref{t5.h3} and \eqref{t5.g1-1}--\eqref{t5.j2-1} is a system of six equations for
$g_1$, $g_2$, $h_3$, $h_4$, $h_5$, $j_2$.
Our final goals are $l_1$ and $l_2$. The series $l_1$ is already expressed via $j_2$ in \eqref{t5.l12}.
Eliminating $j_1$ from \eqref{eq.L2} and \eqref{t5.j1} we express $l_2$ via $h_3$ and $h_4$. Thus
\begin{equation}\label{t5.l12-1}
               l_1 = x\coef_{v^{-1}}j_2(v),\qquad\quad
               l_2 = x^4\coef_{t^{-1}v^{-1}}\!h_3(t,v)
                     \,+\, x^2\coef_{s^{-1}v^{-1}}\!h_4(s,v) \,+\, \delta_{\lambda,1}.
\end{equation}
Notice also that \eqref{t5.j2-1} combined with \eqref{t5.l12-1} yields
$$
    l_1 = \coef_{(suv)^{-1}}\frac{x^3}{uv} g_1(s,u,v) + x^5\delta_{\lambda,2}.
$$


\subsection{ Computation of $c_5$ }
The computation of $c_5$ and its justification are as in Sections~\ref{sect.converge4} and \ref{sect.fred4}
and we omit the details. In brief,
for a fixed $x\in[0,\beta_5^{1/5}+\varepsilon]$, we replace the ``$\coef\,$'' by Cauchy integrals
in \eqref{t5.h3} and \eqref{t5.g1-1}--\eqref{t5.j2-1}, discretize
the obtained system of integral equations, solve them, plug the solution to \eqref{eq.L2} and \eqref{t5.j1},
and compute $\bL$ by \eqref{eq.LL}. In this way we solve numerically the equation $\det(\bI-\bL(x))=0$ and find $c_5$ by 
\eqref{eq.trapez5}.


\section{Computational issues}\label{sect.compute}

\subsection{ Improving the rate of convergence of Riemann sums }

Given an analytic function $f$ on $\T$,
the rate of convergence of the Riemann sums to the Cauchy integral
$$
    \frac{1}{n}\sum_{k=1}^n f(e^{2\pi i k/n}) \;\longrightarrow\; \int_0^1 f(e^{2\pi i t})\,dt = \frac{1}{2\pi i}\int_\T \frac{f(z)}z dz
$$
depends on the maximal width of an annulus of the form $1/r < [z| < r$ to which $f$ extends analytically;
see \cite[Lemma~5.1]{refOre2022}. 
Thus, if $\mu$ is a linear fractional transformation such that $\mu(\T)=\T$ and the function $f\circ\mu$
extends to a wider annulus of this form, then the Riemann sums converge faster after the change of variable $z=\mu(\zeta)$.
It turns out that the accuracy of computation of the functions $H(x)$ (in \cite[\S4.4]{refOre2022}),
$J(x)$ (in \S\ref{sect.4}), and $L_{\lambda\mu}(x)$ (in \S\ref{sect.5}) becomes almost twice better
after the change of variable
\begin{equation}\label{eq.b}
           z = (\zeta+b)/(b\zeta+1)
\end{equation}           
with $b=1/3$ applied at all steps of
the computation; see the middle column of Table~\tabErr.

\begin{remark}
Surprisingly, the upper bounds of the $L^2$-norms of the integration kernels in \S\ref{sect.fred4}
become worse after this variable change with $b>0$ but better with $b<0$.
\end{remark}

Another simple trick (we call it the {\it \quo-shift\/}),
which allowed us to improve the rate of convergence is based
on the fact that all the series we consider have real coefficients. Namely,
let $f$ be as above and let $\sum_{m\in\Z} c_m z^m$ be its Laurent series.
Since $f$ is analytic in a neighborhood of $\T$, we have $|c_m|=o(r^{|m|})$, $0<r<1$.
Let $\omega = e^{2\pi i/n}$. Then the approximation error for the Riemann sum over the nodes $\omega^k$,
$k=1,\dots,n$, is
\begin{equation*}
\begin{split}
   \int_0^1 f(e^{2\pi i t})\,dt - \frac1n\sum_{k=1}^n f(\omega^k)
   &=  c_0 - \frac1n\sum_{k=1}^n f(\omega^k)
   =  c_0 - \frac1n\sum_{k=1}^n\sum_{m\in\Z}c_m \omega^{km}
\\
   &=  c_0 - \frac1n\sum_{m\in\Z}c_m\sum_{k=1}^n \omega^{km}
   =  c_0 - \!\!\sum_{m\equiv 0(n)}\!\!\! c_m = o(r^n),
\end{split}
\end{equation*}
whereas, if all the $c_m$ are real, then the approximation error for the real part of the Riemann sum
over the shifted nodes $\omega_0\omega^k$, where $\omega_0 = e^{2\pi i/4n}$ (and hence $\omega_0^n=i$), is 
\begin{equation*}
\begin{split}
   \int_0^1 f(e^{2\pi i t})\,dt &- \frac1n\re\sum_{k=1}^n f(\omega_0\omega^k)
   =  c_0 - \frac1n\re\sum_{k=1}^n f(\omega_0\omega^k)
\\
  &=  c_0 - \frac1n\re\sum_{k=1}^n\sum_{m\in\Z}c_m \omega_0^m\omega^{km}
   =  c_0 - \frac1n\re\sum_{m\in\Z}c_m\omega_0^m\sum_{k=1}^n \omega^{km}
\\
   &=-\re(\; \dotsc ic_{-3n} - c_{-2n} - ic_{-n} + ic_n - c_{2n} - ic_{3n} +\dots\;) = o(r^{2n}).
\end{split}
\end{equation*}

We illustrate the efficiency of these improvements in the following example. In \cite[Figure~7]{refOre2022}
is presented a {\tt Wolfram Mathematica} function {\tt H} that computes the $n$-th approximation of $H(x)$ for a given $x$,
where $H$ is the function such that the first real solution $x_0$ of $H(x)=1$ determines $c_3$ by
the formula $c_3=-\frac23\log_2 x_0$.
In \cite[Table~7]{refOre2022}, the results of computation of $H(x_0)-1$ by this program are given for $n=100,200,\dots,1200$.
They are reproduced in the left column of Table~\tabErr\ here.

\begin{figure}
	\centering
		\includegraphics[width=140mm]{prog-tr3.eps}
\vskip-1mm
	\caption {The program from \cite{refOre2022} improved using \eqref{eq.b} and the \quo-shift.}
	\label{fig.Prog}
\end{figure}

In Figure~\ref{fig.Prog} we present a modification of the program from \cite{refOre2022} implementing the above improvements.
We see in Table~\tabErr\ that the impact of both improvements is significant. The program used for the middle column is
as in Figure~\ref{fig.Prog} but without the \quo-shift, i.e., with ``{\tt omega0=}...'' replaced by ``{\tt omega0=1}''
(here we used $x_0$ computed to 1100 digits). 

\vbox{%
 \centerline{Table \tabErr. Approximations of $H(x_0)-1$. }
 \centerline{$b$ is from \eqref{eq.b}; $t_{\tt H}$ and $t_{\tt H2}$ are the CPU time for {\tt H} and {\tt H2}. }
 \medskip
 \centerline{
   \vbox{\offinterlineskip
   \def\h {height2pt&\omit&&\omit&&\omit&&\omit&&\omit&\cr}
   \def\o{\omit} \def\t{\times 10} \def\s{\;\;} \def\ss{\s\s} \def\b{\!\!}
   \def\q{\quad\;}
   \hrule
   \halign{&\vrule#&\strut\;\hfil\s#\s\hfil\cr
   \h
   & $n$ && {\tt H} from \cite{refOre2022} && $b=\frac13$, no \quo-shift   && {\tt H2} in Fig.\ref{fig.Prog}  && $t_{\tt H2}/t_{\tt H}$ &\cr
   \h
   \noalign{\hrule}
   \h
   &  100 && $1.44\t^{-10}\s$ && $-4.05\t^{-21}\s$ && $7.96\t^{-30}\s$ && 1.19 &\cr\h  
   &  200 && $5.01\t^{-22}\s$ && $-6.95\t^{-42}\s$ && $3.60\t^{-60}\s$ && 1.22 &\cr\h  
   &  300 && $1.73\t^{-33}\s$ && $-8.63\t^{-63}\s$ && $1.79\t^{-90}\s$ && 1.31 &\cr\h  
   &  400 && $6.02\t^{-45}\s$ && $-9.56\t^{-84}\s$ && $9.35\t^{-121}$  && 1.33 &\cr\h  
   &  500 && $2.09\t^{-56}\s$ && $-9.92\t^{-105}$  && $4.99\t^{-151}$  && 1.37 &\cr\h  
   &  600 && $7.26\t^{-68}\s$ && $-9.88\t^{-126}$  && $2.71\t^{-181}$  && 1.41 &\cr\h  
   &  700 && $2.52\t^{-79}\s$ && $-9.58\t^{-147}$  && $1.49\t^{-211}$  && 1.47 &\cr\h  
   &  800 && $8.78\t^{-91}\s$ && $-9.09\t^{-168}$  && $8.25\t^{-242}$  && 1.57 &\cr\h  
   &  900 && $3.06\t^{-102}$  && $-8.49\t^{-189}$  && $4.61\t^{-272}$  && 1.66 &\cr\h  
   & 1000 && $1.06\t^{-113}$  && $-7.84\t^{-210}$  && $2.59\t^{-302}$  && 2.01 &\cr\h  
   & 1100 && $3.72\t^{-125}$  && $-7.16\t^{-231}$  && $1.45\t^{-332}$  && 2.01 &\cr\h  
   & 1200 && $1.29\t^{-136}$  && $-6.49\t^{-252}$  && $8.27\t^{-363}$  && 2.03 &\cr\h  
   \noalign{\hrule}
   }}
 }
}


\subsection{ The software used for solving big linear systems with high precision } \label{sect.soft}
The systems of linear equations in \cite{refOre2022} were not too big, and we used
{\tt Wolfram Mathematica} to solve them (see \cite[Figure~7]{refOre2022} and Figure~\ref{fig.Prog}).
The size of the linear systems in the present paper exceeds the capacity of {\tt Mathematica}, and we used {\tt MATLAB}
(the code is available at
  \url{https://www.math.univ-toulouse.fr/~orevkov/tr45.html}
  as well as 1100 digits of $c_3$).
For computations in \S\ref{sect.5}, the standard precision was enough because
anyway it was impossible to find $c_5$ with a higher precision since the number of
equations grows too rapidly.

As for computations in \S\ref{sect.4}, it is possible to do them with high precision
but since we did not have access to a high precision version of {\tt MATLAB},
we combined {\tt MATLAB} and {\tt Mathematica}.
Namely,
in order to solve a matrix equation $AX=B$
we computed successive approximations of the solution $X_0,X_1,X_2,\dots$ where
$X_0=0$,
$X_{k+1}=X_k + x_k$, and $x_k$ is a solution of $AX=B_k$ computed by {\tt MATLAB}
where $B_k$ is $B-AX_k$ computed by {\tt Mathematica} with a suitable higher precision.

The computation of $B_k$ can be performed without storing the whole high precision matrix $A$
to the memory, because each entry of $A$ is used only once.
Since the matrix of the system \eqref{t4.g1}--\eqref{t4.h3} is sparse (its dimension is of
order $n^2$ but the number of non-zero entries is of order $n^3$), the computation of
$B_k$ with {\tt Mathematica} takes a reasonable time.


\section{Exact values and empirical estimates }\label{sect.exact}

\subsection{Exact values}
In \cite[\S 2.2]{refOre2022} we reported about 
some computed exact values of $f(m,n)$.
Since then we have modified the C-program that was used for these
computations: optimized the memory allocation
(storing in most cases 32-bit offsets instead of 64-bit pointers) 
and parallelized the computations using the {\tt pthread} library.
This allowed us to compute
$f(6,n)$ for $n=51,\dots,74$,
$f(7,n)$ for $n=21,\dots,38$,
$f(8,n)$ for $n=14,\dots,20$,
$f(9,n)$ for $n=10,11,12$, and
$f(10,10)$. In particular,
$$
  f(10,10) = 
  14961 82791 49336 06361 14702 97684 37248 21429 66337 89595 10906 99398 812
$$
(this was the most extensive computation) and
\begin{align*}
  f(6,74) =
  &\; 11003439126736826379022902097610783003781798654205692030019324180445118\\
  &\; 30333800021120430306017131753558876942975300335453133989087039447873894\\
  &\; 00069135251552234037734742405444996579954965774712177539373757147267542\\
  &\; 4020011731373405247672299081300738745960922639298730311421288165457168,
\end{align*}
which gives $c_{6,74} \approx 2.11018$. This is the largest computed
capacity of a rectangle but it is less than $c_5\approx 2.11801 $.
The C-program and all the computed exact values are available at

\smallskip
\centerline{
  \url{https://www.math.univ-toulouse.fr/~orevkov/tr.html}.
}
\smallskip
\noindent
(some them can be found in OEIS \cite{refOEIS}: A082640, A296165, A351484--A351488).


\subsection{Empirical estimates} \label{sect.empi}

In this subsection we present empirical estimates of $c_m$ via the known exact values
of $f(m,n)$. We use the method proposed in \cite[\S6]{refLZ}.
For a fixed $m$ with known values of $f(m,1),\dots,f(m,2k+2)$, we find
$A$, $a_1,\dots,a_k$, and $b_1,\dots,b_k$ such that
\begin{equation}\label{eq.LZ}
               \frac{f(m,n+1)}{f(m,n)} = A\frac{n^k + a_1 n^{k-1} + \dots + a_k}
                                               {n^k + b_1 n^{k-1} + \dots + b_k}
\end{equation}
for $n=1,\dots,2k+1$,
and assume that the equation \eqref{eq.LZ} is a good approximation of the quotient
$f(m,n+1)/f(m,n)$ for all $n$. If $a_1\ne b_1$, this assumption implies that
(see \cite[\S6]{refLZ})
$$
         f(m,n) \sim \operatorname{const}\cdot A^n\, n^\alpha,\qquad\alpha={a_1-b_1}.
$$

Let $A_m^{(k)}$ and $\alpha_m^{(k)}$ be the constants $A$ and $\alpha$ found in this way
for given $m$ and $k$, and let 
$c_m^{(k)}=\frac{1}{m}\log_2 A_m^{(k)}$.
We have
\begin{equation}\label{eq.emp}
\begin{split}
    |c_3^{(399)}-c_3|\approx 10^{-40},\qquad
   &|c_4^{( 99)}-c_4|\approx 10^{-12},\qquad
    |c_5^{( 56)}-c_5|\approx 10^{ -8},
\\
    |\alpha_3^{(399)}+0.5|\approx 10^{-35},\qquad
   &|\alpha_4^{( 99)}+0.5|\approx 10^{ -8},\qquad
    |\alpha_5^{( 56)}+0.5|\approx 10^{ -5}.
\end{split}
\end{equation}
In Figure~\ref{fig.emp} we show the difference $c_m^{(k)}-c_m$ for $m=3,4,5$.
We see that, for the most of the values of $k$, 
this difference is less than $10^{-2-0.11\,k}$, though there are some rare exceptions,
for example so are $c_4^{(12)}=2.0927$, $c_5^{(13)}=2.1725$, $c_6^{(20)}=2.0714$, and $c_6^{(35)}=2.1288$
(see the dashed lines in Figure~\ref{fig.emp}).
Based on these data, one can expect that $c_6^*=2.1284$ and $c_7^*=2.136$ are good approximations of $c_6$ and $c_7$
(see Figure~\ref{fig.emp} and Tables~\tabEmp\ and \tabEmpCseven). The extrapolation using \eqref{eq.LZ} with
the computed values and empirical estimates of $\lim_n f(m,n)^{1/n}$ for $m=0,\dots,7$ (as well as for
$m=1,\dots,6$ or $m=2,\dots,7$) allows us to expect that $2.2<c<2.3$.

\if01{
A6 = << "~/TR/PADE/dA6.txt"
c6 = 43 * 10^13
c6 = 437263 * 10^15
N[Table[(A6[[n]]-c6)/10^(20-n/10),{n,3,35}]]
}\fi 

\medskip
\vbox{%
 \centerline{Table \tabEmp. Empirical estimates of $c_6$. }
 \smallskip
 \centerline{
   \vbox{\offinterlineskip
   \def\h {height2pt&\omit&&\omit&&\omit&&\omit&&\omit&&\omit&&\omit&&\omit&\cr}
   \def\o{\omit} \def\t{\times 10} \def\s{\;\;} \def\ss{\s\s} \def\b{\!\!}
   \def\q{\quad\;}
   \hrule
   \halign{&\vrule#&\strut\;\hfil\s#\s\hfil\cr
   \h
   & $k$ && $c_6^{(k)}$ && $k$   && $c_6^{(k)}$  && $k$ && $c_6^{(k)}$ && $k$ && $c_6^{(k)}$ &\cr
   \h
   \noalign{\hrule}
   \h
%
   &  5 &&  2.127148  && 13 &&  2.127743  && 21 &&  2.1283724  && 29 &&  2.1283929  &\cr\h
   &  6 &&  2.130178  && 14 &&  2.127968  && 22 &&  2.1284091  && 30 &&  2.1283885  &\cr\h
   &  7 &&  2.128419  && 15 &&  2.127263  && 23 &&  2.1282670  && 31 &&  2.1283897  &\cr\h
   &  8 &&  2.134234  && 16 &&  2.127728  && 24 &&  2.1283909  && 32 &&  2.1283895  &\cr\h
   &  9 &&  2.127842  && 17 &&  2.127728  && 25 &&  2.1283861  && 33 &&  2.1283899  &\cr\h
   & 10 &&  2.127929  && 18 &&  2.127766  && 26 &&  2.1283909  && 34 &&  2.1284081  &\cr\h
   & 11 &&  2.127791  && 19 &&  2.128433  && 27 &&  2.1283921  && 35 &&  2.1287987  &\cr\h
   & 12 &&  2.127888  && 20 &&  2.071426  && 28 &&  2.1283915  && 36 &&  2.1284021  &\cr\h
   \noalign{\hrule}
   }}
 }
}
\medskip
\vbox{%
 \centerline{Table \tabEmpCseven. Empirical estimates of $c_7$. }
 \smallskip
 \centerline{
   \vbox{\offinterlineskip
   \def\h {height2pt&\omit&&\omit&&\omit&&\omit&&\omit&&\omit&&\omit&&\omit&&\omit&&\omit&\cr}
   \def\o{\omit} \def\t{\times 10} \def\s{\;\,} \def\ss{\s\s} \def\b{\!\!}
   \def\q{\quad\;}
   \hrule
   \halign{&\vrule#&\strut\;\hfil\s#\s\hfil\cr
   \h
   & $k$ && $c_7^{(k)}$ && $k$   && $c_7^{(k)}$  && $k$ && $c_7^{(k)}$ && $k$ && $c_7^{(k)}$ && $k$ && $c_7^{(k)}$ &\cr
   \h
   \noalign{\hrule}
   \h
   &  4 &&  2.13722  &&  7 &&  2.13348  && 10 &&  2.12209  && 13 && 2.13657 && 16 && 2.13628 &\cr\h
   &  5 &&  2.14004  &&  8 &&  2.15126  && 11 &&  2.13562  && 14 && 2.13599 && 17 && 2.13629 &\cr\h
   &  6 &&  2.14037  &&  9 &&  2.12815  && 12 &&  2.13597  && 15 && 2.13657 && 18 && 2.13646 &\cr\h
   \noalign{\hrule}
   }}
 }
}
\medskip
%
%

\begin{figure}
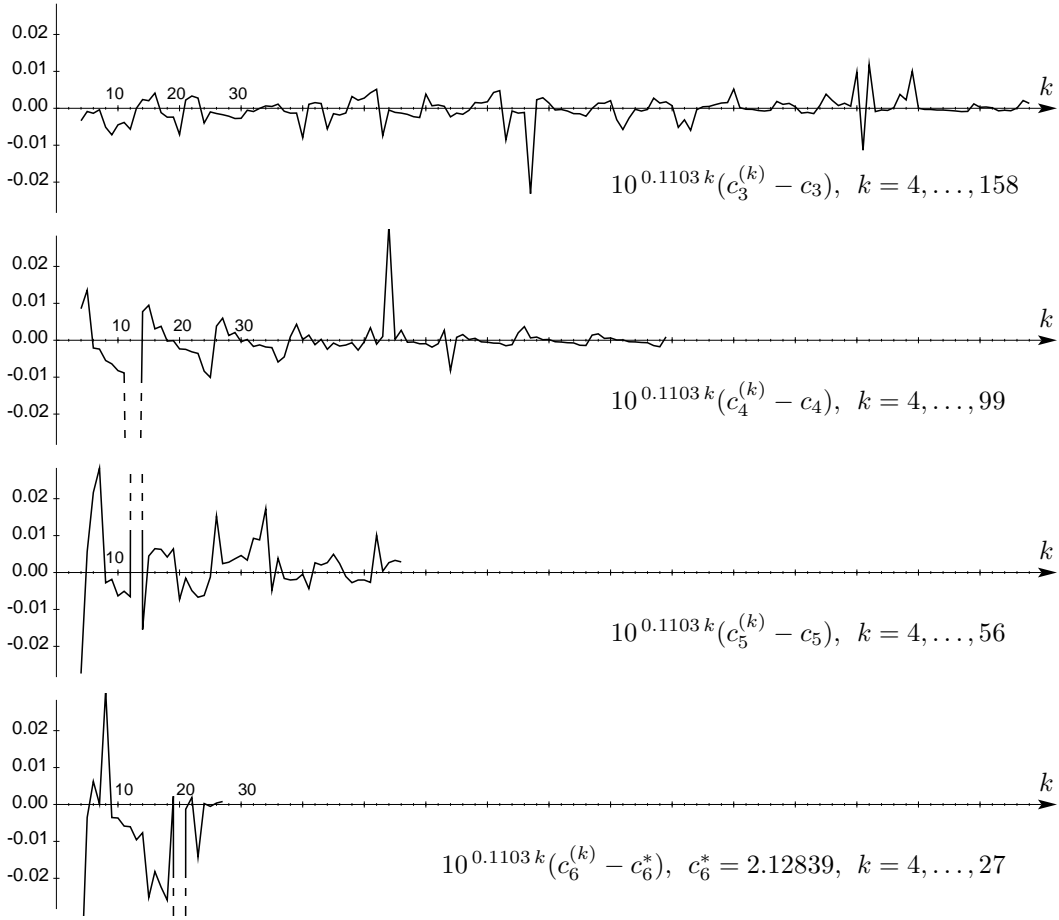

	\centering
	\includegraphics[width=130mm]{pade3.eps}
	\put(-8,47){$k$}
	\put(-170,10){$10^{\,0.1103\,k}(c_3^{(k)}-c_3)$, $\;k=4,\dots,158$}\vskip 3pt
	
	\centering\includegraphics[width=130mm]{pade4.eps}
	\put(-8,47){$k$}
	\put(-170,16){$10^{\,0.1103\,k}(c_4^{(k)}-c_4)$, $\;k=4,\dots,99$}\vskip 3pt
	
	\centering\includegraphics[width=130mm]{pade5.eps}
	\put(-8,47){$k$}
	\put(-170,16){$10^{\,0.1103\,k}(c_5^{(k)}-c_5)$, $\;k=4,\dots,56$}\vskip 3pt
	
	\centering\includegraphics[width=130mm]{pade6.eps}
	\put(-8,47){$k$}
	\put(-234,16){$10^{\,0.1103\,k}(c_6^{(k)}-c^*_6)$, $\;c^*_6=2.1284$, $\;k=4,\dots,36$}
\vskip-1mm
	\caption {Convergence of the empirical estimates to $c_3,c_4,c_5,c^*_6$.}
	\label{fig.emp}
\end{figure}

The second line in \eqref{eq.emp} leads to a conjecture that
$f(m,n)\sim \operatorname{const}\cdot\, 2^{c_m mn}/\sqrt n$ for any $m$.
A computation shows that the convergence of $\alpha_m^{(k)}$ to $-1/2$ looks as good as for the numbers $c_m^{(k)}$.
For $m=1$ this asymptotics follows from Stirling's formula:
$f(1,n)=\binom{2n}n\sim 4^n/\sqrt{\pi n}$.

Notice that usually such asymptotics mean that the first real singularity $x=x_0$ of
the generating function
is a brunching point and the leading term of the Laurent-Puiseux expansion at it
is $C(x-x_0)^{-1/2}$ (see, e.g., \cite[Figure~VII.24]{refFS}, \cite[p.~596]{refOtter}).

The factor $n^{-1/2}$ might also mean that there is a sort of Central Limit Theorem for the numbers
$j^*_{a,n-a}$ (defined \S\ref{sect.trapez4}) and their analogues for $m>4$.


\subsection{Convexity conjecture}

In \cite[\S2.3]{refOre2022} we formulated the following
{\rm convexity conjecture:} $f(m,n-1)f(m,n+1) \ge f(m,n)^2$
for all $m,n\ge 1$. It implies the bound
\begin{equation}\label{conv.conj}
     c_m \ge (n+1)c_{m,n+1} - nc_{m,n}
\end{equation}
for any $m,n$, in particular, $c\ge c_{115}\ge 5c_{115,5}-4c_{115,4}\approx 2.16848$.
The newly computed exact values of $f(m,n)$ still confirm this conjecture.

Passing to the limit in \eqref{conv.conj} we obtain $c\ge (n+1)c_{n+1}-nc_n$.
Using the computed values of $c_2,\dots,c_5$ the conjecture implies lower bounds
$$
   c\ge 3c_3-2c_2\approx 2.14641, \quad   
   c\ge 4c_4-3c_3\approx 2.16413, \quad   
   c\ge 5c_5-4c_4\approx 2.17436,         
$$
and 
$c\ge 6c_6^*-5c_5\approx 2.1803$ (see \S\ref{sect.empi}),    
which agrees with the expected bounds $2.2<c<2.3$ from \S\ref{sect.empi}
(recall that the best proven bounds are $2.118 < c < 2.777$).


\section{ Non-primitive lattice triangulations }\label{sect.np}

Denote the number of all (not necessarily primitive) lattice triangulations of the
$m \times n$ rectangle by $f^{\np}(m, n)$. 
In \cite[\S6]{refOre2022} we gave a rather coarse upper bound for these numbers.
In this section we prove the following asymptotic lower bound for them.
\begin{equation}\label{eq.np}
    \lim_{n\to\infty} f^{\np}(n,n)^{1/n^2} \ge 5.
\end{equation}
Indeed, given integers $n$ and $k\le n$, consider triangulations of the rectangle
$[0,n]^2$ such that each vertical segment $\{m\}\times[0,n]$, $m\in\Z$, is the
union of $k$ edges of the triangulation. There are $\binom{n}{k-1}$ ways to choose
vertices on each vertical line and, for fixed vertices, $\binom{2k}{k}$ triangulations of each vertical
strip (see \cite[Eq.~(2.1)]{refKZ}). Thus, the total number of such triangulations is
$\binom{n}{k-1}^{n+1}\binom{2k}{k}^n$. Let $f_x(n,n)$,  $x\in[0,1]$, be this number for $k=\lfloor xn\rfloor$.
By Stirling formula we obtain
$$
   \lim_{n\to\infty} \ln f_x(n,n)^{1/n^2} = x\ln 4 - x\ln x - (1-x)\ln(1-x).
$$
The first two derivatives of the right hand side are $\ln(4-4x)-\ln x$ and $1/(x-x^2)$, hence
the maximum is attained at $x=4/5$ and it is equal to $\ln 5$, whence the bound \eqref{eq.np}.

It is easy to check that the limit will be the same if we consider all lattice triangulations of $[0,n]^2$
such that each vertical strip of width 1 is a union of triangles.


\end{document}